\definecolor{gray}{gray}{0.4}
\DeclareMathOperator{\Ktop}{K_{top}}
\DeclareMathOperator{\pr}{pr}
\DeclareMathOperator{\Sym}{Sym}
\DeclareMathOperator{\og}{OG10}
\DeclareMathOperator{\Hom}{Hom}
\DeclareMathOperator{\Sign}{sign}
\DeclareMathOperator{\Stab}{Stab}
\DeclareMathOperator{\NS}{NS}
\DeclareMathOperator{\T}{T}
\DeclareMathOperator{\rank}{rk}
\DeclareMathOperator{\rk}{rk}
\DeclareMathOperator{\Aut}{Aut}
\DeclareMathOperator{\Bir}{Bir}
\DeclareMathOperator{\Mon}{Mon}
\DeclareMathOperator{\Br}{Br}
\DeclareMathOperator{\prim}{prim}
\DeclareMathOperator{\bL}{\mathbf{L}}
\DeclareMathOperator{\A}{\mathbf{A}}
\DeclareMathOperator{\D}{\mathbf{D}}
\DeclareMathOperator{\E}{\mathbf{E}}
\DeclareMathOperator{\U}{\mathbf{U}}
\DeclareMathOperator{\bLambda}{\boldsymbol{\Lambda}}
\newcommand{\msv}{M_v(S,\theta)}
\newcommand{\msvs}{{M}^s_v(S,\theta)}
\newcommand{\tmsv}{\widetilde{M}_v(S,\theta)}
\newcommand{\tmsvtwisted}{\widetilde{M}_v(S,\alpha,\theta)}
\newcommand{\msvtwisted}{M_v(S,\alpha,\theta)}
\title[O'Grady tenfolds as moduli spaces of sheaves]{O'Grady tenfolds as moduli spaces of sheaves}
\author[Camilla Felisetti]{Camilla Felisetti}
\address[Camilla Felisetti]{Dipartimento di Scienze Fisiche, Informatiche e Matematiche, Università degli Studi di Modena e Reggio Emilia, Via Campi 213/B, 41125 Modena, Italy }
\email{camilla.felisetti@unimore.it}
\author{Franco Giovenzana}
\address[Franco Giovenzana]{Fakultät für Mathematik, TU Chemnitz, Reichenhainer Str. 39, 09126 Chemnitz, Germany}
\curraddr{Laboratoire de mathématiques d’Orsay, Université Paris Saclay, Rue Michel Magat, Bât. 307, 91405 Orsay, France}
\email{franco.giovenzana@universite-paris-saclay.fr}
\author{Annalisa Grossi}
\address[Annalisa Grossi]{Fakultät für Mathematik, TU Chemnitz, Reichenhainer Str. 39, 09126 Chemnitz, Germany}
\curraddr{Laboratoire de mathématiques d’Orsay, Université Paris Saclay, Rue Michel Magat, Bât. 307, 91405 Orsay, France}
\email{annalisa.grossi@universite-paris-saclay.fr}
\subjclass[2020]{14J42, 14E07 (14J50)}
\keywords{Irreducible holomorphic symplectic manifolds, symplectic birational transformations, cubic fourfolds, O'Grady's ten dimensional example}
\begin{document}

\maketitle
\begin{abstract}	
We give a lattice-theoretic characterization for a manifold of \(\og\) type to be birational to some moduli space of (twisted) sheaves on a K3 surface. We apply it to the Li--Pertusi--Zhao variety of \(\og\) type associated to any smooth cubic fourfold. Moreover we determine when a birational transformation is induced by an automorphism of the K3 surface and we use this to classify all induced birational symplectic involutions.

\end{abstract}
%\tableofcontents

\section{Introduction}
 Knowing birational models of irreducible holomorphic symplectic (ihs) manifolds is a significant step towards the full comprehension of their geometry.
 In recent years, the birational geometry of ihs manifolds and their deformation theory have had extensive applications in many fields of algebraic geometry: not only they are a fundamental tool for hunting new examples of ihs varieties in both the smooth and singular case (see for example \cite{Menet22,BGMM24}) but also they turned out to be one of the key ingredients in the investigation of P=W phenomena arising from non abelian Hodge theory, see for example \cite{dCMS,SY,FM,FSY}.
 %Throughout the paper we refer to ihs manifolds deformation equivalent to the O'Grady ten dimensional example as ihs manifolds of \(\og\) type. 
 %A marked pair of $\og$ type is a pair $(X,\eta)$ where $X$ is an ihs manifold of $\og$ type and \(\eta\colon H^{2}(X,\mathbb{Z}) \to \bL\) is a fixed isometry of lattices. 
 
 %In what follows, we denote by $\bL$ the isometry class of $H^{2}(X,\mathbb{Z})$, which is well defined in view of \autoref{definv}.
 Many examples of ihs manifolds are realized from moduli spaces of sheaves on abelian or K3 surfaces. 
% \textcolor{red}{\sout{The O'Grady examples are symplectic resolutions of these moduli spaces and form a codimension \(3\) locus in the space of deformations of ihs manifolds of \(\og\) type. For this reason, given any such manifold it is then natural to investigate when it is birational to a moduli space of sheaves.} }
 This is the case of the two O'Grady examples $\mathrm{OG}6$ and $\og$ of dimensions 6 and 10, which are constructed as symplectic resolutions of these moduli spaces. 
 In this paper we focus on ihs manifolds deformation equivalent to $\og$ and throughout the dissertation we refer to them as manifolds of $\og$ type.
 Since any two birational ihs manifolds are deformation equivalent but the converse is far from being true (see \cite[Theorem 2.5]{H03}), it is natural to investigate when an ihs manifold in this deformation class is birational to a symplectic resolution of a moduli space of sheaves on a K3 surface.
 The first motivation for this paper was to provide an answer to this question.
 As it is well known, the geometry of an ihs manifold $X$ is encoded in its second integral cohomology group $H^2(X,\mathbb{Z})$. This group is torsion free and carries a symmetric non-degenerate bilinear form, called the Beauville–Bogomolov–Fujiki form, which induces a lattice structure on it.
 The first main result of the paper provides an answer to this question via a lattice-theoretic characterization. To this end, in accordance with the already existing terminology \cite{MW15,Grossi:induced} we introduce the lattice-theoretic notion of numerical moduli space (see \autoref{nms}).
\begin{theorem*}[\ref{FGG}]
Let $(X,\eta)$ be a marked pair of $\og$ type. The following conditions are equivalent:
\begin{enumerate}[(i)]
    \item There exists a K3 surface $S$, a Mukai vector $v=2w$ where $w$ is a primitive Mukai vector of square $2$, and a $v-$generic polarization $\theta$ on $S$ such that $X$ is birational to $\tmsv$;
    \item The manifold \(X\) is a numerical moduli space.
\end{enumerate}
\end{theorem*}

While this extends analogous results for the other known deformation types of ihs manifolds \cite{MW15,Grossi:induced}, the theory of moduli spaces of sheaves on K3 surfaces developed into considering also their twisted analogues.
On the one hand, the generalization to twisted K3 surfaces was motivated by the existence of coarse moduli spaces; on the other hand it has become evident that allowing twists has quite unexpected applications (see for example \cite{Huybrechtsderivedtor} for further details).
In \autoref{FGGtwisted} we then consider the twisted picture and extend the above result to this setting. 

Later we consider the ihs manifold of \(\og\) type $\widetilde{X}_Y$ defined by Li, Pertusi, and Zhao (LPZ) which is associated with a smooth cubic fourfold \(Y\) \cite{LPZ}.
We are able to provide a purely lattice-theoretic characterization for $\widetilde{X}_Y$ to be birational to a moduli space of sheaves on a (possibly twisted) K3 surface, providing a new proof of \cite[Theorem 3.2]{GGO}.
\begin{theorem*}[\ref{FGG_implies_GGO} and \ref{FGGtwisted_implies_GGOtwisted}]
Let $\widetilde{X}_Y$ be an LPZ-variety associated to a smooth cubic fourfold $Y$. The following conditions are equivalent.
\begin{enumerate}[(i)]
\item The cubic fourfold $Y$ lies in the Hassett divisor $\mathcal{C}_d$ with $d$ satisfying $\mathrm{(**)}$ (resp. $\mathrm{(**')}$).
\item The LPZ-variety $\widetilde{X}_Y$ is a (resp. twisted) numerical moduli space.
\end{enumerate}
\end{theorem*}
The same question for Fano varieties of lines of cubic fourfolds and for the so called Lehn--Lehn--Sorger--van Straten symplectic eightfolds has been previously answered in \cite{addington, Huybrechtscubic} and \cite{Addingtongiovenzana23,LPZtwisted}. 

Once we know under which conditions an ihs manifold of \(\og\) type is birational to a moduli space of sheaves on a K3 surface $S$, it is then natural to investigate which birational transformations are induced by an automorphism of $S$ (see \autoref{definduced}). To this end we introduce the notion of \textit{numerically induced birational transformations} (see \autoref{defnumericallyinduced}), which is a characterization in terms of their action on the second integral cohomology lattice.

\begin{theorem*}[\ref{induced_aut}]
Let $(X,\eta)$ be a smooth marked pair of $\og$ type and let $G\subset \mathrm{Bir}(X)$ be a finite subgroup. If $G$ is a numerically induced group of birational transformations, then there exists a K3 surface $S$ with an injective group homomorphism $G\hookrightarrow \mathrm{Aut}(S)$, a $G$-invariant Mukai vector $v$ and a $v$-generic polarization $\theta$ on $S$ such that $X$ is birational to $\tmsv$ and $G\subset\mathrm{Bir}(X)$ is an induced group of birational transformations. 
\end{theorem*}

For what concerns symplectic actions on \(\og\) type manifolds we know that the unique symplectic regular automorphism of finite order is the identity \cite[Theorem 1.1]{GGOV}, and symplectic birational involutions are classified \cite{Marquand:sympl.birat.}. Lattice-theoretic constraints for nonsymplectic automorphisms of \(\og\) type are treated in \cite{brandhorst2020prime}. Nonsymplectic automorphisms of \(\og\) obtained from a cubic fourfold via the construction of \cite{LSV} will be treated in the paper in preparation \cite{BG}, where many techniques similar to the ones used to treat automorphisms of the O'Grady six dimensional example (see \cite{GOV20} and \cite{Grossi_nonsympl_OG6}) are exploited. Applying our criterion we show that there exists a unique induced birational symplectic involution on an ihs manifold of \(\og\) type. More precisely we give the following characterization. 

\begin{theorem*}[\ref{example_ind_aut}]
Let \((X, \eta)\) be a marked pair of \(\og\) type. Assume that \(X\) is a numerical moduli space and let \(\varphi \in \Bir(X)\) be a birational symplectic involution. Then \(\varphi\) is induced if and only if
\[H^{2}(X,\mathbb{Z})^{\varphi} \cong \U^{\oplus{3}} \oplus \E_8(-2) \oplus \A_2(-1)  \ \text{and} \ H^{2}(X,\mathbb{Z})_{\varphi} \cong \E_8(-2),\]
where $H^{2}(X,\mathbb{Z})^{\varphi}$ and $H^{2}(X,\mathbb{Z})_{\varphi}$ denote respectively the invariant and the coinvariant lattice under the action of $\varphi$.
\end{theorem*}
%\textcolor{blue}{
%\subsection{Relations with other works}
%marquand, gimiovanni e claudio,cami junliang qizheng, giovgiovono }
\subsection{Outline of the paper}
In \autoref{sec:preliminaries} we recall some preliminary results about lattice theory and briefly describe the lattice structure of the second cohomology group of an ihs manifold. In \autoref{sec:moduliog10} we specialize to manifolds of $\og$ type: after describing their construction and properties, we state and prove \autoref{FGG}. In \autoref{sec:twistedmoduli} we provide a generalization of it to the twisted case.
In \autoref{sec:biratLPZ} we apply the birationality criteria of \autoref{sec:moduliog10} and \autoref{sec:twistedmoduli} to the LPZ variety. Finally in \autoref{sec:numind} we examine birational transformations of $\og$ type to determine which ones are induced. 

\subsection*{Acknowledgements}
The authors wish thank Luca Giovenzana and Giovanni Mongardi for useful discussions. Also, they are indebted with the anonymous referee for taking the time of revising the first first version of the paper so carefully and for the valuable comments on it. 
\subsection*{Fundings}
All authors have been supported by INdAM GNSAGA. \\
 C.F. has been partially supported by PRIN "Moduli spaces and Birational Classifications" and by the University of Modena and Reggio Emilia, project "Discrete Methods in Combinatorial Geometry and Geometric Topology". \\
 F.G. and A.G. have been partially supported by the DFG through the research grant Le 3093/3-2, and partially supported by the European Research Council (ERC) under the European Union’s Horizon 2020 research and innovation programme (ERC-2020-SyG-854361-HyperK). F.G.’s research have been partially funded by Deutsche Forschungsgemeinschaft (DFG, German Research Foundation), Projektnummer 509501007.

\section{Preliminaries in lattice theory}\label{sec:preliminaries}
In this section we recall some basic facts of lattice theory for irreducible holomorphic symplectic manifolds, focusing on the special case of manifolds of $\og$ type. 

\subsection{Abstract lattices}
\begin{definition} A lattice $\bL$ is a free finite rank $\mathbb{Z}$-module with a non-degenerate symmetric bilinear form 
\begin{align*}
\cdot \colon \bL\times \bL &\rightarrow \mathbb{Z}\\
(e,f) &\mapsto e\cdot f.
\end{align*}
\end{definition}
With an abuse of notation we denote $e\cdot e$ simply by $e^2$. Moreover, we say that the lattice $\bL$ is \textit{even} if $e^2$ is even for any element \(e \in \bL\) and we denote the determinant, the rank and the signature of the lattice $\bL$ by $\det(\bL)$, $\rank(\bL)$ and $\Sign(\bL)$ respectively. 
\begin{definition}
Given $e\in \bL$, we define the \textit{divisibility} of $e$ in $\bL$ as
\[(e,\bL)\coloneqq \mathrm{gcd}\{e\cdot f\mid f\in \bL\}.\]
\end{definition}
Given a lattice $\bL$, we define the \textit{dual lattice} of $\bL$ to be the lattice $\bL^{\vee}\coloneqq \Hom_{\mathbb{Z}}(\bL,\mathbb Z)$. The dual lattice can be characterized uniquely as 
$$ \bL^{\vee}=\left\lbrace x\in \bL\otimes  \mathbb{Q}\mid x\cdot y\in \mathbb{Z} \text{ for all }y\in \bL \right\rbrace.$$

One can show that the quotient $A_{\bL}\coloneqq\bL^{\vee}/\bL$ is a finite group, called \textit{discriminant group} of $\bL$, of order $\vert\det(\bL)\vert$. We say that a lattice $\bL$ is \textit{unimodular} if $A_{\bL}$ is trivial.

\begin{notation}
We denote the unique unimodular even indefinite rank 2 lattice by $\U$. Moreover the symbols $\A_n,\D_n,\E_n$ denote the positive definite ADE lattices of rank $n$. The notation $[m]$ for some $m\in \mathbb{Z}$ indicates a rank 1 lattice generated by a vector of square $m$. \\
For any lattice $\bL$ we denote by $\bL(n)$ the lattice with the same structure as $\bL$ as a $\mathbb{Z}$-module and quadratic form multiplied by $n$.
\end{notation}

Whenever we have a lattice $\bL$, we can consider a subgroup $G\subset O(\bL)$ of isometries on $\bL$. We denote by $\bL^G$ the \textit{invariant} sublattice of $\bL$ with respect to the action of $G$, and by $\bL_{G}\coloneqq (\bL^{G})^{\perp}$ its orthogonal complement in $\bL$, which is called \textit{coinvariant} sublattice.

\subsubsection{Hodge structure on $\bL$} Given a lattice $\bL$ with a weight 2 Hodge structure, i.e. a decomposition 
\[\bL_{\mathbb{C}}\coloneqq \bL\otimes_{\mathbb{Z}}\mathbb{C}=\bL_{\mathbb{C}}^{2,0}\oplus \bL_{\mathbb{C}}^{1,1}\oplus \bL_{\mathbb{C}}^{0,2} \]
with $\overline{\bL_{\mathbb{C}}^{2,0}}\cong \bL_{\mathbb{C}}^{0,2}$ and $\overline{\bL_{\mathbb{C}}^{1,1}}\cong \bL_{\mathbb{C}}^{1,1}$, 
%Moreover,  additionally, the Hodge structure is such that $\dim \bL_{\mathbb{C}}^{0,2}=\bL_{\mathbb{C}}^{2,0}=1$. 
we set 
\[\bL^{1,1}\coloneqq \bL_{\mathbb{C}}^{1,1}\cap \bL,\]
and refer to it as the \((1,1)\) - part of the lattice $\bL$.
\begin{definition}
An embedding of lattices $\mathbf{M}\hookrightarrow \bL$ is called \textit{primitive} if the group $\bL/\mathbf{M}$ is torsion free. In this setting we denote by $\mathbf{M}^{\perp}$ the orthogonal complement of $\mathbf{M}$ in $\bL$ with respect to the bilinear form on $\bL$.
\end{definition}

\begin{remark}
If $\bL$ and $\mathbf{M}$ are two lattices with pairings respectively denoted by $\cdot_{\bL}$ and $\cdot_{\mathbf{M}}$ then the $\mathbb{Z}$-module $\bL\oplus\mathbf{M}$ inherits a lattice structure with the pairing $\cdot_{\bL\oplus \mathbf{M}}$ given by 
\[e\cdot_{\bL\oplus \mathbf{M}}f=\begin{cases}
e\cdot_{\bL}f \ \mbox{ if $e,f\in \bL$}\\
e\cdot_{\mathbf{M}}f \  \mbox{ if $e,f\in \mathbf{M}$}\\
0 \ \mbox{ otherwise}
\end{cases}\]
Following \cite{PR14}, we denote this lattice by $\bL\oplus_{\perp}\mathbf{M}$.
\end{remark}

\begin{definition}
Let $\mathbf{M}$ be a lattice endowed with a weight 2 Hodge structure. An embedding of lattices $\mathbf{M}\hookrightarrow \bL$ is called $\textit{Hodge embedding}$ if it is primitive and $\bL$ is endowed with a weight 2 Hodge structure defined as follows:
\[\bL_{\mathbb{C}}^{2,0}=\mathbf{M}_{\mathbb{C}}^{2,0}, \quad \bL_{\mathbb{C}}^{1,1}=(\mathbf{M}^{1,1}\oplus \mathbf{M}^{\perp})\otimes \mathbb{C},\quad \bL_{\mathbb{C}}^{0,2}=\mathbf{M}_{\mathbb{C}}^{0,2}.\]
Moreover, if $\bL$ has already a Hodge structure on it, we say that the Hodge embedding $\mathbf{M}\hookrightarrow \bL$ is \textit{compatible} if the Hodge structure induced by $\mathbf{M}$ is the one of $\bL$.
\end{definition}

\subsection{Lattice structure of irreducible holomorphic symplectic manidolds}
Suppose $X$ is an irreducible holomorphic symplectic manifold of complex dimension $2n$. The second integral cohomology group has a well defined lattice structure: in fact, it is a torsion free $\mathbb{Z}$-module of finite rank endowed with a symmetric bilinear form $(-,-)_X$, called the \textit{Beauville-Bogomolov-Fujiki} (BBF) form, which satisfies the following equality for any $\alpha\in H^2(X,\mathbb{Z})$:
\[\int_X\alpha^{2n}= c_X(\alpha,\alpha)^n_X,\]
for some unique positive rational constant, called the \textit{Fujiki constant}, see for example \cite{beauville1983varietes, fujiki1983primitively}.
\begin{remark}\label{definv}
Note that both the BBF form and the Fujiki constant are invariant up to deformation. 
\end{remark}

\section{Moduli spaces of sheaves of O'Grady 10 type}\label{sec:moduliog10}

In his seminal paper \cite{Ogrady99}, O'Grady discovers a 10-dimensional ihs manifold not deformation equivalent to a Hilbert scheme of K3 or a generelized Kummer variety. The example arises as a symplectic resolution of a moduli space of sheaves on a K3 surface with fixed numerical constraints.
In \autoref{sub:Muk lattice} and in \autoref{sub:moduli} we recall the original construction and its generalizations, focusing also on the lattice-theoretic point of view. In \autoref{subsec:birat_crit} we are finally ready to state and prove criteria to determine if a manifold of \(\og\) type is birational to a moduli space of sheaves on a K3 surface. 
\subsection{The Mukai lattice}\label{sub:Muk lattice}

Let $S$ be a K3 surface. We denote by $\widetilde{H}(S,\mathbb{Z})$ the even integral cohomology of $S$, that is 
\[\widetilde{H}(S,\mathbb{Z})=H^{2*}(S,\mathbb{Z})=H^{0}(S,\mathbb{Z})\oplus H^{2}(S,\mathbb{Z})\oplus H^{4}(S,\mathbb{Z}).\]
We can put a symmetric bilinear pairing on it, called the \textit{Mukai pairing}, in the following way:
\begin{align*}
\widetilde{H}(S,\mathbb{Z})\times \widetilde{H}(S,\mathbb{Z})&\rightarrow \mathbb{Z}\\
(r_1,l_1,s_1),(r_2,l_2,s_2) &\mapsto -r_1s_2+l_1l_2-r_2s_1
\end{align*}
with $r_i\in H^{0}(S,\mathbb{Z})$, $l_i\in H^{2}(S,\mathbb{Z})$ and $s_i\in H^{4}(S,\mathbb{Z})$.

 The lattice $\widetilde{H}(S,\mathbb{Z})$ together with the Mukai pairing is referred to as the \textit{Mukai lattice} and any element $v=(v_0,v_1,v_2)\in \widetilde{H}(S,\mathbb{Z})$ with $v_i\in H^{2i}(S)$ is called \textit{Mukai vector}.

The Mukai lattice is isometric to the unique unimodular lattice of rank 24 and signature \((4,20)\). We denote this isometry class by $\bLambda_{24}$ and we have
\[\bLambda_{24}=\U^{\oplus 4}\oplus \E_8(-1)^{\oplus 2}.\] The
natural Hodge structure on $H^2(S,\mathbb{Z})$ can be extended to a weight 2 Hodge structure on $\widetilde{H}(S,\mathbb{Z})$ by setting $\widetilde{H}_{\mathbb{C}}^{2,0}(S)\coloneqq H^{2,0}_{\mathbb{C}}(S)$ (resp. $\widetilde{H}_{\mathbb{C}}^{0,2}(S)\coloneqq H_{\mathbb{C}}^{0,2}(S)$) and
\[\widetilde{H}^{1,1}_{\mathbb{C}}(S)\coloneqq H^0(S,\mathbb{C})\oplus H_{\mathbb{C}}^{1,1}(S)\oplus H^4(S,\mathbb{C}). \]
Note that, for any Mukai vector $v\in \widetilde{H}(S,\mathbb{Z})$, the orthogonal sublattice with respect to the Mukai pairing  
\[v^{\perp}=\{w\in \widetilde{H}(S,\mathbb{Z})\mid (v,w)=0\}\subseteq \widetilde{H}(S,\mathbb{Z}) \]
inherits a weight 2 Hodge structure from $\widetilde{H}(S,\mathbb{Z})$ in an obvious fashion.

Moreover, given a coherent sheaf $\mathcal{F}$ on $S$, we can naturally associate a Mukai vector to it by setting
\[v(\mathcal{F})\coloneqq(\mathrm{rank} (\mathcal{F}), c_1(\mathcal{F}),c_1(\mathcal{F})^2/2 -c_2(\mathcal{F}) + \mathrm{rank}(\mathcal{F}))\in \widetilde{H}(S,\mathbb{Z}).\]
By construction, $v(\mathcal{F})=(r,l,s)$ is of (1,1)-type and it satisfies one of the following relations:
\begin{enumerate}[(i)]
\item $r>0$;
\item $r=0$ and $l\neq 0$ with $l$ effective;
\item $r=l=0$ and $s>0$.
\end{enumerate}

\begin{definition} A nonzero vector $v\in \widetilde{H}(S,\mathbb{Z})$ satisfying $v^2\geq 2$ and one of the conditions above is called \textit{positive} Mukai vector.
\end{definition}

\subsection{Moduli spaces of sheaves of $\og$ type}\label{sub:moduli}
Given an algebraic Mukai vector $v$ and a $v$-generic polarization $\theta$, we can consider the moduli space $\msv$ of $\theta$-semistable sheaves on $S$ with Mukai vector $v$.

More generally we consider Mukai vectors $v=mw$ where $m\in \mathbb{N}$ and $w\in \widetilde{H}(S,\mathbb{Z})$ is some primitive Mukai vector. Let $\msvs \subseteq \msv$ be the locus of $\theta$-stable sheaves: it is well known that if $\msvs\neq \emptyset$ then it is smooth of dimension $v^2+2$ and carries a symplectic form \cite{mukai1984symplectic,Yoshioka03,Yoshioka99}.
%\textcolor{red}{Indeed Mukai shows \cite{mukai1984symplectic} that when $v$ is a primitive Mukai vector then $\msvs$ is an IHS manifold.}

\begin{theorem}[Mukai, Yoshioka] Let $S$ be a K3 surface, $v$ be a primitive and positive Mukai vector
and $\theta$ be a $v$-generic polarization. Then $\msvs=\msv$ and it is a irreducible holomorphic symplectic manifold of dimension $v^2+2=2n$, which is deformation equivalent to the $n$-th Hilbert scheme of a K3 surface. Moreover there is a Hodge isometry between $v^{\perp}\cong H^2(\msv,\mathbb{Z})$.
\end{theorem}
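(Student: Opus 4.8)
The plan is to follow the classical arguments of Mukai and Yoshioka, organized into four stages: reducing semistability to stability, smoothness together with the symplectic form, deformation equivalence to the Hilbert scheme, and the construction of the Mukai–Hodge isometry. First I would dispose of the strictly semistable locus. Since $v$ is primitive and $\theta$ is $v$-generic, the polarization avoids the finitely many walls in the ample cone along which a sheaf could acquire a destabilizing subsheaf of the same reduced Hilbert polynomial; primitivity of $v$ forbids any proper Jordan--Hölder factor from sharing the slope of $\mathcal F$ at a generic $\theta$. Hence every $\theta$-semistable sheaf with Mukai vector $v$ is in fact $\theta$-stable, giving $\msvs=\msv$.

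Next I would establish smoothness, the dimension count, and the symplectic form by deformation theory. At a stable, hence simple, sheaf $[\mathcal F]$ the Zariski tangent space is $\mathrm{Ext}^1(\mathcal F,\mathcal F)$, while the obstruction lies in the trace-free part $\mathrm{Ext}^2_0(\mathcal F,\mathcal F)$. On a K3 surface $\omega_S\cong\mathcal O_S$, so Serre duality gives $\mathrm{Ext}^2(\mathcal F,\mathcal F)\cong\Hom(\mathcal F,\mathcal F)^\vee\cong\mathbb C$, and the trace identifies this with $H^2(\mathcal O_S)$; thus $\mathrm{Ext}^2_0=0$ and $\msv$ is smooth. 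The Mukai--Riemann--Roch identity $\chi(\mathcal F,\mathcal F)=-(v,v)=-v^2$ together with $\dim\mathrm{Ext}^0=\dim\mathrm{Ext}^2=1$ then forces $\dim\mathrm{Ext}^1(\mathcal F,\mathcal F)=v^2+2$, which is the asserted dimension. The Yoneda pairing
\[\mathrm{Ext}^1(\mathcal F,\mathcal F)\times\mathrm{Ext}^1(\mathcal F,\mathcal F)\longrightarrow\mathrm{Ext}^2(\mathcal F,\mathcal F)\xrightarrow{\ \mathrm{tr}\ }H^2(\mathcal O_S)\cong\mathbb C\]
is alternating and nondegenerate on each tangent space; Mukai shows it globalizes to a closed holomorphic $2$-form $\sigma$, everywhere nondegenerate, so $\msv$ is holomorphic symplectic.

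The heart of the proof, and the step I expect to be the main obstacle, is showing that $\msv$ is \emph{irreducible} holomorphic symplectic and deformation equivalent to the Hilbert scheme $S^{[n]}=M_{(1,0,1-n)}(S)$. Following Yoshioka, I would prove that for primitive $v$ with $v^2=2n-2$ the deformation class of $\msv$ depends only on $n$. The two mechanisms are: (a) placing $(S,v,\theta)$ in a smooth proper family over a connected base parametrizing K3 surfaces together with a primitive $(1,1)$-class of square $2n-2$, so that all fibres are deformation equivalent; and (b) using Fourier--Mukai equivalences and twists by line bundles, which act as isometries of the Mukai lattice and carry moduli spaces to moduli spaces. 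A lattice-theoretic argument shows the orbit of $v$ under these operations, combined with deformations of $S$, reaches the standard vector $(1,0,1-n)$, whose moduli space is $S^{[n]}$. Since $S^{[n]}$ is known to be irreducible holomorphic symplectic by Beauville, $\msv$ inherits simple connectedness and $h^{2,0}=1$, so $\sigma$ spans $H^0(\msv,\Omega^2)$.

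Finally I would construct the Hodge isometry $v^\perp\cong H^2(\msv,\mathbb Z)$ via the Mukai morphism. Let $p$ and $q$ be the projections of $S\times\msv$ onto $S$ and onto $\msv$, and choose a quasi-universal family $\mathcal E$ on $S\times\msv$. Define
\[\theta_v(w)=\Big[q_*\big(\Ch(\mathcal E)^\vee\cdot p^*\sqrt{\Td(S)}\cdot p^* w\big)\Big]_{H^2},\qquad w\in v^\perp.\]
Because $\mathcal E$ is algebraic, $\theta_v$ is a morphism of Hodge structures; that it is an isometry for the Mukai and BBF forms follows from a Grothendieck--Riemann--Roch computation of the intersection pairing on $\msv$. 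Integrality and bijectivity are then checked by reducing, through the deformation equivalence just established, to the case $v=(1,0,1-n)$, where $\theta_v$ recovers the classical description $H^2(S^{[n]},\mathbb Z)\cong H^2(S,\mathbb Z)\oplus\mathbb Z\delta$ and is visibly an isomorphism.
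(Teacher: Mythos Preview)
The paper does not prove this theorem: it is stated as a background result attributed to Mukai and Yoshioka, with references \cite{mukai1984symplectic,Yoshioka03,Yoshioka99}, and no argument is given. So there is no proof in the paper to compare against.

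That said, your outline is a faithful summary of the classical arguments in those references. The four stages you isolate---equality of the stable and semistable loci from primitivity of $v$ and $v$-genericity of $\theta$; smoothness and the symplectic form via $\mathrm{Ext}^2_0=0$ and the Yoneda pairing on a K3; deformation equivalence to $S^{[n]}$ by combining deformations of the pair $(S,v)$ with Fourier--Mukai transforms to reach the ideal-sheaf vector $(1,0,1-n)$; and the Mukai morphism $\theta_v$ checked to be an integral Hodge isometry by reduction to the Hilbert scheme case---are exactly the steps in Mukai's and Yoshioka's papers. One small caveat: in the third step the nonemptiness and irreducibility of $\msv$ are not automatic and are part of what Yoshioka establishes (via elliptic K3's and the deformation argument), so you should flag that as an input rather than assume it. Otherwise your sketch is correct and matches the literature the paper is citing.
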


When $v$ is not primitive, $\msv$ can be singular with a holomorphic symplectic form on its smooth locus. It is then natural to investigate under which conditions $\msv$ admits a symplectic resolution of singularities, namely a resolution carrying a holomorphic symplectic form which extends the holomorphic symplectic form on the smooth locus of $\msv$.
O'Grady starts from this consideration and proves that the moduli space $\mathcal{M}_{10}\coloneqq\msv$ with $v=(2,0,-2)$ and $\theta$ a $v$-generic polarization admits a symplectic resolution $\pi:\widetilde{\mathcal{M}}_{10}\rightarrow \mathcal{M}_{10}$ such that  $\widetilde{\mathcal{M}}_{10}$ is
an irreducible holomorphic symplectic manifold of dimension 10 and second Betti number 24. This latter condition implies in particular that $\widetilde{\mathcal{M}}_{10}$ cannot be deformation equivalent to a Hilbert scheme, so we have a new deformation class of ihs manifolds. 

\begin{definition}[\cite{LLS06}]
We say that an ihs manifold $X$ is of $\og$ type if it is deformation equivalent to the O'Grady 10 dimensional example $\widetilde{\mathcal{M}}_{10}$.
\end{definition} 

In \cite{Rapog10}, Rapagnetta describes the lattice structure of $\widetilde{\mathcal{M}}_{10}$.
Let $\Sigma$ be the singular locus of $\mathcal{M}_{10}$, $B\subset\mathcal{M}_{10}$ be the locus parametrizing non locally free sheaves and let $\widetilde{\Sigma}$ and $\widetilde{B}$ be respectively the exceptional divisor of $\pi$ and the strict transform of $B$.
Finally, let 
\[\widetilde{\mu}:H^2(S,\mathbb{Z})\rightarrow H^2(\widetilde{\mathcal{M}}_{10},\mathbb{Z})\] be the Donaldson morphism, see for example \cite{Li93, Mo93,FM94}. 
\begin{theorem}[{\cite[Theorems 2.0.8 and 3.0.11]{Rapog10}}]
In the above notation we have:
\begin{enumerate}[(i)]
\item the morphism $\widetilde{\mu}$ is injective and 
\[H^2(\widetilde{\mathcal{M}}_{10},\mathbb{Z})=\widetilde{\mu}(H^2(S,\mathbb{Z}))\oplus c_1(\widetilde{\Sigma})\oplus c_1(\widetilde{B})\]
\item The map $\widetilde{\mu}$ is an isometry with respect to the intersection pairing on $S$ and the BBF form on $H^2(\widetilde{\mathcal{M}}_{10})$.
\item Setting $\Delta=c_1(\widetilde{\Sigma})\oplus c_1(\widetilde{B})$, we have that the decomposition 
 \[ H^2(\widetilde{\mathcal{M}}_{10},\mathbb{Z})=\widetilde{\mu}(H^2(S,\mathbb{Z}))\oplus_{\perp} \Delta\]
 is orthogonal with respect to the BBF form and that the restriction of the BBF form on $\Delta$ is 
 \begin{center}
 \begin{tabular}{|c|c|c|}
\hline
 & $c_1(\tilde{\Sigma})$ & $c_1(\tilde{B})$\\
\hline
$c_1(\tilde{\Sigma})$ & -6 & 3 \\
\hline
$c_1(\tilde{B})$ & 3 & -2 \\
\hline
\end{tabular}
\end{center}
\end{enumerate}
\end{theorem}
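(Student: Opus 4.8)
The plan is to establish the three assertions in the order (ii), then (iii), then (i): once $\widetilde{\mu}$ is known to be an isometry and the Gram matrix of $\Delta$ is computed, the integral splitting in (i) will follow formally from a discriminant count. Throughout I would work with O'Grady's explicit realisation of $\pi$ as an iterated blow-up of $\mathcal{M}_{10}$ along the reduced singular strata $\Omega\subset\Sigma$, where $\Omega$ parametrises the polystable sheaves of the form $F\oplus F$; this exhibits $\widetilde{\Sigma}$ as the irreducible exceptional divisor over $\Sigma$ and $\widetilde{B}$ as the proper transform of the non-locally-free divisor $B\subset\mathcal{M}_{10}$. I would use from the start that $b_2(\widetilde{\mathcal{M}}_{10})=24$, proved by O'Grady, together with the deformation invariance of the BBF form and the Fujiki constant (\autoref{definv}), and the lattice identity $v^{\perp}=H^2(S,\mathbb{Z})\oplus_{\perp}\mathbb{Z}\delta$, where $\delta=(1,0,1)$ has $\delta^2=-2$; the summand $\mathbb{Z}\delta$ accounts for the one unit of rank in $v^{\perp}$ not coming from $S$.

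For (ii) I would define $\widetilde{\mu}$ through a quasi-universal family on $S\times\widetilde{\mathcal{M}}_{10}$ and compute the BBF pairing of its values directly. On the stable locus $M_v^s$, whose complement $\Sigma$ has codimension $2$ and over which $\pi$ is an isomorphism, $\widetilde{\mu}$ is the classical Donaldson morphism, and its compatibility with the intersection form parallels the theorem of Mukai and Yoshioka stated above. The point needing care is that the BBF form is the global form on the compact resolution, whereas the Donaldson classes are naturally computed on the non-compact $M_v^s$; I would reconcile the two by the Fujiki relation, evaluating $\int_{\widetilde{\mathcal{M}}_{10}}\widetilde{\mu}(\alpha)^{10}=c\,(\widetilde{\mu}(\alpha),\widetilde{\mu}(\alpha))^5$ via Grothendieck--Riemann--Roch and the projection formula, and reading off $(\widetilde{\mu}(\alpha),\widetilde{\mu}(\alpha))=\alpha^2$. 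This yields injectivity of $\widetilde{\mu}$ and proves (ii); in particular $\widetilde{\mu}(H^2(S,\mathbb{Z}))$ is a unimodular copy of $\U^{\oplus3}\oplus\E_8(-1)^{\oplus2}$.

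The heart of the argument, and the step I expect to be the main obstacle, is (iii). I would locate the two boundary classes inside $H^2$: $c_1(\widetilde{\Sigma})$ is the new class created by the resolution, while $c_1(\widetilde{B})$ recovers the $\delta$-direction of $v^{\perp}$ modulo a rational multiple of $c_1(\widetilde{\Sigma})$, the precise coefficient being dictated by the order of vanishing of $B$ along $\Sigma$. Orthogonality $\widetilde{\mu}(H^2(S,\mathbb{Z}))\oplus_{\perp}\Delta$ follows from the isometry of the Donaldson morphism on $v^{\perp}$ (in which $\delta\perp H^2(S)$) together with the orthogonality of the K3 classes to the exceptional divisor, itself checked by a Fujiki/projection-formula computation. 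What remains is to pin down the three entries $c_1(\widetilde{\Sigma})^2=-6$, $c_1(\widetilde{B})^2=-2$ and $c_1(\widetilde{\Sigma})\cdot c_1(\widetilde{B})=3$. This requires the local models of $\pi$ along the two strata --- a transverse $A_1$-singularity along $\Sigma\setminus\Omega$ and the more degenerate singularity along $\Omega$ --- and the evaluation of mixed top intersection numbers such as $\int\widetilde{\mu}(h)^{8}\cdot(a\,c_1(\widetilde{\Sigma})+b\,c_1(\widetilde{B}))^2$ against a positive class $\widetilde{\mu}(h)$, converted into pairings through the Fujiki relation. Controlling the geometry of the fibres of $\pi$ over $\Sigma$ finely enough to extract the self-intersection $-6$ and the cross term $3$ is the genuinely hard computation.

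Finally, (i) drops out formally. By (ii) the summand $\widetilde{\mu}(H^2(S,\mathbb{Z}))$ is unimodular, and by (iii) the lattice $\Delta$ has $\det\Delta=(-6)(-2)-3^2=3$; since the two pieces are orthogonal, the sublattice $L'=\widetilde{\mu}(H^2(S,\mathbb{Z}))\oplus_{\perp}\Delta$ has $\det L'=3$ and, being non-degenerate of rank $24=b_2$, sits with finite index in $L=H^2(\widetilde{\mathcal{M}}_{10},\mathbb{Z})$. From $\det L'=[L:L']^2\det L$ and the fact that $3$ is squarefree one gets $[L:L']=1$, so $L'=L$ and the direct sum holds over $\mathbb{Z}$, completing (i). As a by-product this identifies $H^2(\widetilde{\mathcal{M}}_{10},\mathbb{Z})\cong\U^{\oplus3}\oplus\E_8(-1)^{\oplus2}\oplus\A_2(-1)$, since the change of basis $\{c_1(\widetilde{\Sigma})+c_1(\widetilde{B}),\,c_1(\widetilde{B})\}$ puts $\Delta$ in the form $\A_2(-1)$.
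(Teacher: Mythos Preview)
The paper does not prove this theorem at all: it is quoted verbatim as \cite[Theorems~2.0.8 and~3.0.11]{Rapog10} and used as background input for the results of \autoref{sec:moduliog10}. There is therefore no ``paper's own proof'' to compare your proposal against.

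That said, your outline is a reasonable sketch of the strategy Rapagnetta actually follows in \cite{Rapog10}: the Donaldson morphism is shown to be an isometric embedding via Fujiki-type computations, the boundary classes $c_1(\widetilde{\Sigma})$ and $c_1(\widetilde{B})$ are analysed through the explicit geometry of O'Grady's resolution, and the integral splitting is deduced from a discriminant/index argument once the Gram matrix of $\Delta$ is known. Your identification of the hard step is accurate: the computation of the three entries of the Gram matrix of $\Delta$ (especially $c_1(\widetilde{\Sigma})^2=-6$ and the cross term $3$) is the substantial part, and in \cite{Rapog10} it occupies most of the effort, requiring a careful study of the exceptional fibres and of how $B$ meets $\Sigma$. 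Your final paragraph deriving (i) from (ii) and (iii) by the squarefree-discriminant trick is clean and correct. If anything is understated in your proposal, it is the amount of work hidden in ``controlling the geometry of the fibres of $\pi$ over $\Sigma$'': this is not a routine local computation but the core of Rapagnetta's paper.
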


The construction has been later generalized by Perego and Rapagnetta \cite{PR13} allowing different types of Mukai vectors and polarizations. More precisely, Perego and Rapagnetta introduce the concept of OLS-triple $(S,v,\theta)$ where $S$ is a K3 surface, $v$ is an algebraic Mukai vector and $\theta$ a $v$-generic polarization.
\begin{definition}[\cite{PR13}, Definition 1.5]
Let $S$ be a projective K3 surface, $\theta$ a polarization on $S$ and $v$ an algebraic Mukai vector. We say that $(S,v,\theta)$ is an OLS-triple if 
\begin{enumerate}[(i)]
\item $\theta$ is primitive and $v$-generic;
\item $v=2w$ where $w$ is a primitive Mukai vector with $w^2=2$;
\item if we write $w=(r,l,s)$ then $r\geq 0$, $l\in \mathrm{NS}(S)$ and if $r=0$ then $l$ is the first Chern class of an effective divisor. 
\end{enumerate}
\end{definition}

One of the main results of \cite{PR13} asserts that whenever $(S,v,\theta)$ is an OLS-triple, the moduli space $\msv$ admits a symplectic resolution $\tmsv$ which is of \(\og\) type.
\begin{theorem}[\cite{PR13},Theorem 1.6]
Let $(S,v,\theta)$ be an OLS-triple. The resolution $\tmsv$ of the moduli space $\msv$ is an irreducible holomorphic symplectic manifold which is deformation equivalent to $\widetilde{\mathcal{M}}_{10}$.
\end{theorem}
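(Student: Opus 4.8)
The plan is to deduce both assertions --- that $\tmsv$ is irreducible holomorphic symplectic and that it is deformation equivalent to $\widetilde{\mathcal{M}}_{10}$ --- from a single connectedness-of-deformations argument, using O'Grady's example as the known base case. Since the ihs condition (compact K\"ahler, simply connected, with a one-dimensional space of holomorphic two-forms spanned by a nowhere-degenerate form) is invariant under smooth deformation, and O'Grady has already established it for $\widetilde{\mathcal{M}}_{10}$, it suffices to exhibit a smooth proper family over a connected base having $\widetilde{\mathcal{M}}_{10}$ as one fibre and $\tmsv$ as another. The argument thus splits into two tasks: (A) for every OLS-triple, produce a smooth symplectic resolution $\tmsv$ of $\msv$; and (B) connect an arbitrary OLS-triple to $(S_0,(2,0,-2),\theta_0)$ through a smooth family of such resolutions.

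For (A) I would first record that the stable locus $\msvs$ is smooth of dimension $v^2+2=10$ and carries Mukai's holomorphic symplectic form, the singular locus $\Sigma=\msv\setminus\msvs$ being the image of the strictly semistable sheaves. The key input is the local structure of $\msv$ along $\Sigma$: by the deformation theory of sheaves, near the S-equivalence class of a polystable sheaf $F_1\oplus F_2$ with $v(F_i)=w$ the moduli space is \'etale-locally a product of a smooth factor with the affine model cut out by the quadratic part of the Kuranishi map. Because $v=2w$ with $w^2=2$, this model is precisely the one analysed by Lehn--Sorger and O'Grady; I would then invoke the Kaledin--Lehn--Sorger theory of symplectic resolutions of moduli of sheaves to conclude that O'Grady's blow-up procedure (blow up $\Sigma$, then the deeper stratum and take the proper transform) yields a smooth resolution $\tmsv$ onto which the symplectic form extends nondegenerately.

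For (B) the strategy is to deform the whole datum $(S,v,\theta)$ to the standard one. Writing $v=2w$ with $w$ primitive of square $2$ in the Mukai lattice $\widetilde{H}(S,\mathbb{Z})\cong\bLambda_{24}$, the transitivity of the isometry group on primitive vectors of fixed square and divisibility, together with surjectivity of the period map for K3 surfaces, lets me deform the pair $(S,w)$ --- keeping $w$ of $(1,1)$-type throughout --- to one on which $w=(1,0,-1)$, that is $v=(2,0,-2)$. Over this path I would form the relative moduli space and its fibrewise resolution; where $\theta$ stays $v$-generic the construction in (A) gives a smooth family, and at the finitely many walls I would use that wall-crossing between generic polarizations produces birational, hence (for these symplectic resolutions) deformation equivalent, moduli spaces in the spirit of Bayer--Macr\`i. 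Chaining these smooth deformations and birational identifications links $\tmsv$ to $\widetilde{M}_{(2,0,-2)}(S_0,\theta_0)=\widetilde{\mathcal{M}}_{10}$, and Ehresmann's theorem together with deformation invariance of the ihs property finishes both claims.

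The main obstacle I anticipate lies in (B): organizing a single connected family of resolved moduli spaces while simultaneously keeping $w$ primitive, algebraic and of square $2$, keeping $\theta$ on the $v$-generic side of the walls, and controlling the behaviour of the symplectic resolution across those walls. The local model computation in (A) is delicate but essentially pointwise; the genuinely hard part is the global coordination guaranteeing that the total space is smooth and connected, so that deformation equivalence actually transfers the ihs structure from O'Grady's example to $\tmsv$.
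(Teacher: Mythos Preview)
The paper does not prove this theorem at all: it is quoted verbatim from \cite[Theorem~1.6]{PR13} and used as a black box, so there is no ``paper's own proof'' to compare your proposal against. Your sketch is a reasonable high-level outline of the actual Perego--Rapagnetta argument (local Lehn--Sorger model along the singular locus to get the resolution, then a deformation of the OLS-triple to the standard one to transport the ihs property from O'Grady's example), but since the present paper simply cites the result, any comparison would have to be with \cite{PR13} rather than with this paper.
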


As a consequence, for any OLS triple $(S,v,\theta)$ the second integral cohomology group of $\tmsv$ is endowed with a lattice structure with the BBF form and with a weight 2 Hodge structure.
Since $\msv$ has rational singularities, the pullback $\pi^*:H^2(\msv,\mathbb{Z})\rightarrow H^2(\tmsv,\mathbb{Z})$ via is injective and by strict compatibility of weight filtrations $H^2(\msv,\mathbb{Z})$ admits a pure weight 2 Hodge structure. Also, it inherits a lattice structure by restricting the BBF form on $H^2(\tmsv,\mathbb{Z})$.

\begin{theorem}[\cite{PR13},Theorem 1.7]\label{pr1.7}
Let $(S,v,\theta)$ be an OLS-triple. 
There is an isometry of Hodge structures 
\[\lambda_v: v^{\perp}\xrightarrow{\simeq} H^2(\msv,\mathbb{Z}).\]
\end{theorem}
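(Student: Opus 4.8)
The plan is to build $\lambda_v$ as a Mukai morphism attached to a (quasi-)universal family, to check directly that it is a morphism of weight-$2$ Hodge structures, and then to deduce that it is an injective isometry which is in fact bijective by combining the primitive Mukai--Yoshioka case with Rapagnetta's description of $H^2(\tmsv,\mathbb{Z})$.

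First I would construct the morphism on the smooth locus. The strictly semistable locus $\Sigma\subset\msv$, where the polystable sheaves are of the form $F_1\oplus F_2$ with $v(F_i)=w$, is isomorphic to $\Sym^2(M_w)$ and hence has codimension $2$, so $\msvs=\msv\setminus\Sigma$ is the smooth locus. On $S\times\msvs$ there is a quasi-universal family $\mathcal{E}$ of some similitude $\rho$, and writing $p$ for the projection onto $\msvs$ I would set
\[\lambda_v(\alpha)=\frac{1}{\rho}\left[p_*\Big(\Ch(\mathcal{E})\cdot\sqrt{\Td(S)}\cdot \alpha^{\vee}\Big)\right]_{2},\qquad \alpha\in v^{\perp},\]
where $\alpha^{\vee}$ denotes the image of $\alpha$ under the Mukai involution and $[\,\cdot\,]_{2}$ the degree-$2$ component. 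Standard arguments (independence of the choice of quasi-universal family and functoriality of the Chern character) show that $\lambda_v$ is a well-defined homomorphism; since $\Ch(\mathcal{E})\sqrt{\Td(S)}$ carries a natural Hodge class and the Mukai pairing respects the Hodge decomposition, $\lambda_v$ is automatically a morphism of Hodge structures. Using that $\Sigma$ has codimension $2$ and that $\msv$ has rational singularities, I would check that these classes extend across $\Sigma$, so that $\lambda_v$ is valued in $H^2(\msv,\mathbb{Z})$ rather than merely in $H^2(\msvs,\mathbb{Z})$.

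Next I would prove injectivity and the isometry property. Since $v=2w$ we have $v^{\perp}=w^{\perp}$, and $(S,w,\theta)$ gives the smooth fourfold $M_w$ with its Mukai--Yoshioka Hodge isometry $\lambda_w\colon w^{\perp}\xrightarrow{\ \sim\ }H^2(M_w,\mathbb{Z})$. In particular the transcendental lattices of $v^{\perp}$ and of $H^2(\msv,\mathbb{Z})$ are both identified with that of $S$, and $\lambda_v$ preserves it, so on the transcendental part the compatibility with the pairings and injectivity are inherited from the known case. Alternatively, because all OLS-triples produce deformation-equivalent resolutions and the Mukai morphism is defined in families, one may verify the isometry on O'Grady's original triple $v=(2,0,-2)$, where it is known, and propagate it by deformation invariance of the BBF form (cf.\ \autoref{definv}). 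It then remains to control the finitely many algebraic directions in $\NS$.

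Finally I would identify the image using the resolution. By the remark preceding the statement the pullback $\pi^{*}\colon H^2(\msv,\mathbb{Z})\hookrightarrow H^2(\tmsv,\mathbb{Z})$ is an isometric embedding onto a rank-$23$ primitive sub-Hodge-structure, and under it the Donaldson classes $\mu(\beta)$ on $\msv$ satisfy $\pi^{*}\mu(\beta)=\widetilde{\mu}(\beta)$ while the class of the non-locally-free divisor satisfies $\pi^{*}[B]=c_1(\widetilde{B})+m\,c_1(\widetilde{\Sigma})$ for a multiplicity $m$ read off from O'Grady's resolution. Writing $w=(r,l,s)$, the sublattice $l^{\perp}\subset v^{\perp}$ is sent into $\widetilde{\mu}(l^{\perp})$, and the complement $w^{\perp}\cap\langle H^0(S),\,l,\,H^4(S)\rangle$ into the lattice spanned by $\widetilde{\mu}(l)$ and $\pi^{*}[B]$. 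A Gram-matrix computation using Rapagnetta's table for $\Delta$, together with $v^2=8$ and $w^2=2$, then shows that $\pi^{*}\lambda_v(v^{\perp})=\widetilde{\mu}(H^2(S,\mathbb{Z}))\oplus_{\perp}\mathbb{Z}\,\pi^{*}[B]=\pi^{*}H^2(\msv,\mathbb{Z})$; since $\pi^{*}$ is an isometric embedding, $\lambda_v$ is the asserted Hodge isometry. The main obstacle is the non-primitivity of $v$: there is no universal sheaf, so the whole analysis must be carried out with quasi-universal families while controlling the behaviour of the Mukai morphism across the strictly semistable locus $\Sigma$ when passing from $\msvs$ to $\msv$. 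The crux is the precise identification of the image --- in particular pinning down the multiplicity $m$ and matching Gram matrices --- which is where O'Grady's explicit resolution and Rapagnetta's intersection computation are indispensable.
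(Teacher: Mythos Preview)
The paper does not contain a proof of this statement: Theorem~\ref{pr1.7} is quoted from \cite[Theorem~1.7]{PR13} and used as a black box, so there is no argument here to compare your proposal against.

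For what it is worth, your outline is broadly in the spirit of the actual proof in \cite{PR13}: the map $\lambda_v$ is constructed there as a Mukai/Le~Potier--Donaldson morphism via a quasi-universal family on the stable locus, and the isometry property is obtained by a deformation argument that reduces to O'Grady's original triple $v=(2,0,-2)$, where Rapagnetta's explicit computation \cite{Rapog10} applies. Two cautions, though. First, the general description of $H^2(\tmsv,\mathbb{Z})$ you appeal to at the end (Theorem~\ref{h2tmsv} here, i.e.\ \cite[Theorem~3.4]{PR14}) is proved \emph{after} and \emph{using} Theorem~\ref{pr1.7}; you may only use Rapagnetta's table for the specific $v=(2,0,-2)$ and must then deform to the general OLS-triple, otherwise the logic is circular. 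Second, your surjectivity computation is written out only for the original example, and the phrase ``propagate it by deformation invariance'' hides the real work: making this rigorous---controlling the Mukai morphism in families of OLS-triples and showing the isometry and surjectivity persist---is precisely the technical core of \cite{PR13}, not a formality.
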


Clearly, if we denote by \[\widetilde{\lambda_v}=\pi^*\circ \lambda_v: v^{\perp}\hookrightarrow H^2(\tmsv,\mathbb{Z})\]
the composition of the above isometry with the pullback, we have that $\widetilde{\lambda_v}$ is an injective morphism of Hodge structures and an isometry onto its image. It is possible to extend this morphism to a Hodge isometry of lattices as follows, see also \cite[\S 3]{PR14} for proofs and further details.
Consider the dual lattice 
\[(v^{\perp})^\vee=\{ \alpha \in v^{\perp}\otimes_{\mathbb{Z}}\mathbb{Q}\mid (\alpha,\beta)\in \mathbb{Z} \ \forall \beta \in v^{\perp}\}.\]
The non degeneracy of the Mukai pairing implies that the morphism
\[v^{\perp}\rightarrow (v^{\perp})^{\vee},\ \alpha\mapsto (\alpha,-)\]
is injective and thus that $v^{\perp}$ can be viewed as a sublattice of $(v^{\perp})^\vee$ of index $|\mathrm{det}(v^{\perp})|=2$.
We denote by $( - , - )_{\mathbb{Q}}$ the $\mathbb{Q}$-bilinear extension of the Mukai pairing to $(v^{\perp})^{\vee}$.
Next, one considers the $\mathbb{Z}$-module
\[(v^{\perp})^\vee\oplus \mathbb{Z}\sigma/2\]
and endows it with a symmetric $\mathbb{Q}$-bilinear form
$b_v$ such that 
\begin{itemize}
\item $b_{v\mid (v^{\perp})^\vee}=( - , - )_{\mathbb{Q}}$,
\item $\sigma$ is orthogonal to $(v^{\perp})\vee$,
\item $b_v(\sigma,\sigma)=-6$.
\end{itemize}
If we consider the submodule $\Gamma_v$ of $(v^{\perp})^\vee\oplus \mathbb{Z}\sigma/2$
\[\Gamma_v:=\{(\alpha,k\sigma/2)\mid k\in 2\mathbb{Z}\text{ if and only if } \alpha \in v^{\perp}\},\]
we can endow it with a weight 2 Hodge structure by setting
\[(\Gamma_v)^{2,0}=(v^{\perp})^{2,0},\quad (\Gamma_v)^{2,0}=(v^{\perp})^{1,1}\oplus \mathbb{C}\sigma,\quad (\Gamma_v)^{0,2}=(v^{\perp})^{0,2}.\]
%, having $v^{\perp}\oplus_{\perp}\mathbb{Z}\sigma$ as index 2 sublattice.
\begin{theorem}[\cite{PR14}, Theorem 3.4]\label{h2tmsv}
The restriction of $b_v$ to $\Gamma_v$ takes integral values (thus endowing $\Gamma_v$ with a lattice structure) and there is a Hodge isometry
$$ f_v: \Gamma_v\xrightarrow{\simeq} H^2(\tmsv,\mathbb{Z}), \quad f_v(\alpha,k\sigma/2)=\widetilde{\lambda_v}(\alpha)+kE,$$
where $E$ is the class of the exceptional divisor of $\pi$.
\end{theorem}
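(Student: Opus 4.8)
The plan is to deduce the statement from the comparison isometry $\widetilde{\lambda_v}$ following \autoref{pr1.7}, an elementary overlattice computation, and Rapagnetta's description of $H^2(\widetilde{\mathcal M}_{10},\mathbb Z)$ together with deformation invariance (\autoref{definv}).

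First I would study $\Gamma_v$ abstractly. Since $(S,v,\theta)$ is an OLS-triple, $v=2w$ with $w$ primitive and $w^2=2$, so $v^\perp=w^\perp$; as $\widetilde{H}(S,\mathbb{Z})$ is unimodular, $A_{v^\perp}\cong\mathbb{Z}/2\mathbb{Z}$, its discriminant form sending the nonzero class to $-\tfrac12\equiv\tfrac32$ in $\mathbb{Q}/2\mathbb{Z}$. On $\langle\sigma\rangle$ with $\sigma^2=-6$, the unique element of order two in the discriminant group is $\tfrac\sigma2\bmod\sigma$, of square $-\tfrac32\equiv\tfrac12$ in $\mathbb{Q}/2\mathbb{Z}$. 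By its definition $\Gamma_v$ is the overlattice of $v^\perp\oplus_\perp\langle\sigma\rangle$ obtained by gluing these two classes; since $\tfrac32+(-\tfrac32)\equiv 0$ and the associated bilinear pairings cancel mod $\mathbb{Z}$, $b_v$ restricts to an even lattice structure on $\Gamma_v$, with $|\det\Gamma_v|=\tfrac{2\cdot6}{2^2}=3$. Going through the isotropic subgroups of $A_{v^\perp}\oplus A_{\langle\sigma\rangle}\cong\mathbb{Z}/2\oplus\mathbb{Z}/6$ one sees, moreover, that $\Gamma_v$ is the unique nontrivial even overlattice of $v^\perp\oplus_\perp\langle\sigma\rangle$.

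Next I would realise $v^\perp\oplus_\perp\langle\sigma\rangle$ geometrically. Let $E'$ be the class of the reduced exceptional divisor of $\pi$. As $\pi$ contracts it onto a locus of codimension $\ge 2$ in $\msv$, one has $\pi_*E'=0$, and the projection formula together with the Fujiki relation give $\bigl(\widetilde{\lambda_v}(\alpha),E'\bigr)_{\tmsv}=0$ for all $\alpha\in v^\perp$; moreover $E'^2$ is preserved under Perego--Rapagnetta's deformations of OLS-triples \cite{PR13}, hence equals the value $-6$ computed by Rapagnetta for $v=(2,0,-2)$ \cite{Rapog10}. Thus $L:=\widetilde{\lambda_v}(v^\perp)\oplus_\perp\mathbb{Z}E'\subseteq H^2(\tmsv,\mathbb{Z})$ is a full-rank sublattice isometric to $v^\perp\oplus_\perp\langle\sigma\rangle$, via $\widetilde{\lambda_v}$ on the first summand and $\sigma\mapsto E'$ on the second. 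The $\mathbb{Q}$-linear extension of the inverse isometry, restricted to $\Gamma_v$, is the map $f_v$ of the statement; it is an isometry onto $H^2(\tmsv,\mathbb{Q})$ by construction and a morphism of weight-two Hodge structures, since $(\Gamma_v)^{2,0}=(v^\perp)^{2,0}$ goes to the line of the symplectic form while $\sigma$ is of type $(1,1)$ and $E'\in\NS(\tmsv)$.

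Finally I would show $f_v(\Gamma_v)=H^2(\tmsv,\mathbb{Z})$. We have $|\det L|=2\cdot6=12$, whereas $H^2(\tmsv,\mathbb{Z})\cong\bLambda_{\og}$ has $|\det|=3$ by Rapagnetta's computation and deformation invariance; hence $[H^2(\tmsv,\mathbb{Z}):L]=2$. Since $H^2(\tmsv,\mathbb{Z})$ is an even lattice containing $L\cong v^\perp\oplus_\perp\langle\sigma\rangle$ with index $2$, it is a nontrivial even overlattice of $L$, and by the uniqueness above $f_v^{-1}$ maps it isomorphically onto $\Gamma_v$. In particular $b_v$ is integral on $\Gamma_v$, the glue class $\widetilde{\lambda_v}(\alpha_0)+E$ ($\alpha_0$ generating $A_{v^\perp}$) is integral, and $f_v\colon\Gamma_v\xrightarrow{\simeq}H^2(\tmsv,\mathbb{Z})$ is the desired Hodge isometry. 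The step that carries the real content, rather than formal bookkeeping, is the equality $|\det H^2(\tmsv,\mathbb{Z})|=3$: it is what excludes $H^2(\tmsv,\mathbb{Z})=L$ and forces the extra glue class. I would obtain it by transporting Rapagnetta's explicit decomposition $H^2(\widetilde{\mathcal M}_{10},\mathbb{Z})=\widetilde\mu\bigl(H^2(S,\mathbb{Z})\bigr)\oplus_\perp\langle c_1(\widetilde\Sigma),c_1(\widetilde B)\rangle\cong\bLambda_{\og}$ along a deformation of OLS-triples linking $(S,v,\theta)$ with $(S,(2,0,-2),\theta)$ that carries along $E'$ and $\widetilde{\lambda_v}(v^\perp)$.
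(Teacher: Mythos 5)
This theorem is quoted from Perego--Rapagnetta and the paper itself offers no proof, only a description of their strategy: establish the statement for the original example $v=(2,0,-2)$ (Rapagnetta's computation) and then extend to arbitrary OLS-triples by deforming the morphism $f_v$ and checking that integrality, the $\mathbb{Z}$-module property and the isometry property are preserved along the deformation. Your reconstruction is correct but replaces that second step by a purely lattice-theoretic argument: you use deformation invariance only for numerical data (the abstract lattice $\U^{\oplus 3}\oplus\E_8(-1)^{\oplus 2}\oplus\A_2(-1)$ of discriminant $3$, and $E^2=-6$), then observe that $\widetilde{\lambda_v}(v^\perp)\oplus_\perp\mathbb{Z}E$ is a full-rank even sublattice of discriminant $12=2^2\cdot 3$, so that $H^2(\tmsv,\mathbb{Z})$ is an index-two even overlattice, and conclude by the uniqueness of the nontrivial isotropic class in $A_{v^\perp}\oplus A_{\langle\sigma\rangle}\cong\mathbb{Z}/2\oplus\mathbb{Z}/6$. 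This is cleaner in that the overlattice is pinned down abstractly rather than by tracking the map through a family; what it costs is the reliance on the global lattice computation for $\og$ type manifolds (itself the $v=(2,0,-2)$ case of the theorem, so no circularity, but the same base case PR14 use). Two small points: the orthogonality $(\widetilde{\lambda_v}(\alpha),E)=0$ that you derive from the projection formula and the polarized Fujiki relation is exactly the content of \autoref{cor:finiteindexemb}, which the paper deduces \emph{from} this theorem, so re-deriving it independently as you do is the right move; and the deformation input $E^2=-6$ could even be dispensed with, since primitivity of $E$ together with $\det(v^\perp)=2$, $\det H^2(\tmsv,\mathbb{Z})=3$ and the constraint that the glue group injects into both discriminant groups already force $E^2=\pm 6$, and the signature forces the sign.
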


From the previous discussion, one can easily deduce the following corollary. 
%Finally they show that the class $E\in H^2(\tmsv,\mathbb{Z})$ of the exceptional divisor has square $-6$ and divisibility $3$, see \cite[Remarks 3.2 and 3.5]{PR14}. Moreover the following theorem holds.
\begin{corollary}\label{cor:finiteindexemb}
In the above notation, there exists a finite index embedding
\[v^{\perp}\oplus_{\perp} \mathbb{Z}E\hookrightarrow H^2(\tmsv,\mathbb{Z}).\]
\end{corollary}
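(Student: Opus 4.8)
The plan is to read the corollary directly off \autoref{h2tmsv}, which provides a Hodge isometry $f_v \colon \Gamma_v \xrightarrow{\;\simeq\;} H^2(\tmsv, \mathbb{Z})$ with $\Gamma_v \subseteq (v^{\perp})^{\vee} \oplus \mathbb{Z}\tfrac{\sigma}{2}$ explicitly described there. Hence it suffices to locate inside $\Gamma_v$ a finite-index sublattice isometric to $v^{\perp} \oplus_{\perp} \mathbb{Z}E$ and transport it along $f_v$.

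First I would take $N \coloneqq v^{\perp} \oplus \mathbb{Z}\sigma \subseteq (v^{\perp})^{\vee} \oplus \mathbb{Z}\tfrac{\sigma}{2}$, where $\sigma = 2\cdot\tfrac{\sigma}{2}$, and check that $N \subseteq \Gamma_v$: an element $(\alpha, m\sigma)$ of $N$, written $(\alpha, 2m\cdot\tfrac{\sigma}{2})$, has $\alpha \in v^{\perp}$ and even coefficient $k=2m$, so the biconditional defining $\Gamma_v$ holds. By the conditions imposed on $b_v$, the form restricted to $N$ is the orthogonal direct sum of the Mukai pairing on $v^{\perp}$ with the rank-one lattice $\mathbb{Z}\sigma$ of square $b_v(\sigma,\sigma)=-6$, so $N \cong v^{\perp} \oplus_{\perp} [-6]$; under $f_v$ the summand $v^{\perp}$ goes to $\widetilde{\lambda_v}(v^{\perp})$ and $\mathbb{Z}\sigma$ to the rank-one sublattice spanned by the exceptional class $E$ of $\pi$. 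Thus $f_v(N)$ is a copy of $v^{\perp} \oplus_{\perp} \mathbb{Z}E$ inside $H^2(\tmsv,\mathbb{Z})$, and it only remains to check that it has finite index.

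That is immediate, but I would record it precisely. Since $N$ and $\Gamma_v$ both span $(v^{\perp}\otimes\mathbb{Q}) \oplus \mathbb{Q}\sigma$, they have equal rank and the index is finite; and since $\big((v^{\perp})^{\vee}\oplus\mathbb{Z}\tfrac{\sigma}{2}\big)/N \cong \mathbb{Z}/2\oplus\mathbb{Z}/2$ while $\Gamma_v$ maps onto the diagonal $\mathbb{Z}/2$ therein, one gets $[\Gamma_v:N]=2$. Transporting by $f_v$ produces the desired index-$2$ embedding $v^{\perp} \oplus_{\perp} \mathbb{Z}E \hookrightarrow H^2(\tmsv,\mathbb{Z})$, automatically compatible with Hodge structures since $f_v$ is. There is no genuine obstacle here --- the content is entirely in \autoref{h2tmsv} --- the only book-keeping point being the normalization of the generator $E$, i.e.\ whether one works with $\sigma$ or $\tfrac{\sigma}{2}$, which affects none of the conclusions.
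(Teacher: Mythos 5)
Your argument is correct and is precisely the deduction the paper intends: the paper gives no proof beyond ``one can easily deduce'' this from \autoref{h2tmsv}, and restricting $f_v$ to the index-two sublattice $v^{\perp}\oplus\mathbb{Z}\sigma\subset\Gamma_v$ is exactly the right way to do it. Your closing remark on normalization is also apt --- as literally printed, $f_v(\alpha,k\sigma/2)=\widetilde{\lambda_v}(\alpha)+kE$ would send $\sigma$ to $2E$ and could not be an isometry (both $\sigma$ and $E$ have square $-6$), so one must read $\sigma\mapsto E$, as you do.
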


The proof of \autoref{h2tmsv} is first carried out for original O' Grady example where $v=(2,0,-2)$ (see \cite[Remark 3.2]{PR14}) and then extended to all OLS triples via a deformation argument. 
Namely Perego and Rapagnetta show that the Hodge morphism $f_v$ takes values in $H^2(\tmsv,\mathbb{Z})$, that is a morphism of $\mathbb{Z}$-modules and an isometry of lattices. 
A key point in the proof is showing that under the chosen deformation, all these properties are preserved. 
In particular the class $E$ of the exceptional divisor in $H^2(\tmsv,\mathbb{Z})$ corresponds to $c_1(\widetilde{\Sigma})\in H^2(\widetilde{\mathcal{M}_{10}})$, thus it has square -6 and divisibility 3 (see \cite[Remark 3.5]{PR14}).

\subsubsection{Lattice of O'Grady 10 type manifolds}

In fact, one can describe the lattice structure for any variety of $\og$ type.
\begin{definition}
    A marked pair of $\og$ type is pair $(X,\eta)$ where $X$ is an irreducible holomorphic symplectic manifold of $\og$ type and \(\eta\colon H^{2}(X,\mathbb{Z}) \to \bL\) is a fixed isometry of lattices.
\end{definition}
 In what follows, we denote by $\bL$ the isometry class of $H^{2}(X,\mathbb{Z})$, which is well defined in view of \autoref{definv}.
In \cite{Rapog10} Rapagnetta shows that $\bL$ is isometric to
\[\U^{\oplus 3}\oplus \E_8(-1)^{\oplus 2}\oplus \A_2(-1),\]
which is an even lattice of rank $24$ and signature \((3,21)\).

Moreover, being the second cohomology group of a compact K\"ahler manifold, $\bL$ is endowed with a natural weight \(2\) Hodge structure, whose Hodge numbers are deformation invariant.

In particular, if $(X,\eta)$ is a marked pair of $\og$ type, the integral lattice $\bL^{1,1}$ is called the \textit{Néron--Severi} lattice of $X$ and it is denoted by $\NS(X)$. Its orthogonal complement is called \textit{transcendental lattice} and it is denoted by $\T(X)$.
\begin{remark}
Note that in the notation above, if \(\sigma \in \bL\) is a primitive class such that \(\sigma^2=-6\) and \((\sigma, \bL)=3\), then the lattice \(\sigma^{\perp} \cong \U^{\oplus3} \oplus \E_8(-1)^{\oplus3} \oplus [-2]\) admits a primitive embedding in the even unimodular lattice \(\bLambda_{24}\) sending the unimodular part in itself and the generator of square \(-2\) in the difference of two generators of one copy of \(\U\). Moreover this primitive embedding is unique, see \cite[Proposition 1.15.1]{Nik79}. 
\end{remark}

\subsection{Birationality criteria for OG10 type manifolds}\label{subsec:birat_crit}

We provide an entirely lattice-theoretic criterion to determine when a manifold $X$ of $\og$ type is birational to a moduli space $\tmsv$ of sheaves  on a K3 surface $S$, where $(S,v, \theta)$ is an OLS-triple as described in \autoref{sec:preliminaries}.

\begin{definition}\label{nms}
Let $(X,\eta)$ be a projective marked pair of $\og$ type, where $\eta \colon H^2(X,\mathbb{Z})\rightarrow \bL$ is a fixed marking. We say that $X$ is a \textit{numerical moduli space} of $\og$ type if there exists a primitive class $\sigma\in \bL^{1,1}$ such that 
\begin{enumerate}[(a)]
\item $\sigma^2=-6$ and $(\sigma,\bL)=3$,
\item The Hodge embedding $\sigma^{\perp_{\bL}}\hookrightarrow \bLambda_{24}$ embeds a copy of $\U$ in $\bLambda_{24}^{1,1}$.
%\item {\color{blue} Endowing $\bLambda_{24}$ with a Hodge structure via the Hodge embedding $\sigma^{\perp_{\bL}}\hookrightarrow \bLambda_{24}$ we have a direct sum decomposition
%\[\bLambda_{24}^{1,1}=\U\oplus \textbf{T},\]
%for some even lattice $\textbf{T}$ of signature $(1,-)$.}

\end{enumerate}
\end{definition}
We are now in a position to state our birationality criterion.
\begin{theorem}\label{FGG}
Let $(X,\eta)$ be a marked pair of $\og$ type. The following conditions are equivalent:
\begin{enumerate}[(i)]
    \item There exists a K3 surface $S$, a Mukai vector $v=2w$ where $w$ is a primitive Mukai vector of square $2$ and a $v-$generic polarization $\theta$ on $S$ such that $X$ is birational to $\tmsv$;
    \item The manifold \(X\) is a numerical moduli space.
\end{enumerate}
\end{theorem}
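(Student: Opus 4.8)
The plan is to prove the two implications separately, leveraging the lattice-theoretic machinery recalled above together with the Torelli theorem for manifolds of $\og$ type (due to Verbitsky, with the monodromy group computed by Onorati) and the surjectivity of the period map.

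\medskip

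\emph{(i) $\Rightarrow$ (ii).} Suppose $X$ is birational to $\tmsv$ for an OLS-triple $(S,v,\theta)$ with $v = 2w$, $w^2 = 2$. Birational ihs manifolds have Hodge-isometric second cohomology, so it suffices to exhibit the required class $\sigma$ for $\tmsv$. By \autoref{h2tmsv}, $H^2(\tmsv,\mathbb{Z}) \cong \Gamma_v$ and the class $E$ of the exceptional divisor satisfies $E^2 = -6$, $(E, H^2(\tmsv,\mathbb{Z})) = 3$, and $E$ is of type $(1,1)$; so I would take $\sigma = \eta(E)$. It remains to verify condition (b): the Hodge embedding $\sigma^{\perp_{\bL}} \hookrightarrow \bLambda_{24}$ sends a copy of $\U$ into $\bLambda_{24}^{1,1}$. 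Here $\sigma^{\perp}$ is Hodge-isometric to $v^{\perp} \oplus_{\perp} \langle \text{something} \rangle$... more precisely, from the description of $\Gamma_v$ one reads off that $E^{\perp_{\Gamma_v}}$ is a finite-index overlattice related to $v^{\perp}$; in any case $\sigma^{\perp}$ contains (with finite index) a Hodge sublattice isometric to $v^{\perp}$, whose Hodge structure is the one induced from the Mukai lattice $\widetilde{H}(S,\mathbb{Z}) \cong \bLambda_{24}$. Since $v = 2w$ is algebraic, $v^{\perp}$ contains the hyperbolic plane $U$ spanned by $(1,0,0)$ and $(0,0,1)$ (both algebraic, both orthogonal to $v$), and this $U$ lies in $\widetilde{H}^{1,1}(S) = \bLambda_{24}^{1,1}$. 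One must check this $U \subset v^\perp$ survives inside $\sigma^\perp$ as a primitive sublattice landing in the $(1,1)$-part under the embedding into $\bLambda_{24}$; this is the content of the uniqueness of the embedding recorded in the Remark after \autoref{h2tmsv}, so I would invoke that.

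\medskip

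\emph{(ii) $\Rightarrow$ (i).} Assume $X$ is a numerical moduli space with distinguished class $\sigma \in \bL^{1,1}$. By the Remark before \autoref{subsec:birat_crit}, $\sigma^{\perp_{\bL}} \cong \U^{\oplus 3} \oplus \E_8(-1)^{\oplus 2} \oplus [-2]$ and admits a unique primitive embedding into $\bLambda_{24}$. Condition (b) gives a copy of $\U$ sitting inside $\bLambda_{24}^{1,1}$; write $\bLambda_{24} = U \oplus_{\perp} U^{\perp}$ with $U^{\perp} \cong \U^{\oplus 3} \oplus \E_8(-1)^{\oplus 2}$, the standard K3 lattice $\bLambda_{K3}$ extended — actually $U^{\perp_{\bLambda_{24}}} \cong \widetilde{\bLambda}_{K3}$; hmm, rather: $U^{\perp}$ is again a copy of the full Mukai lattice minus a $U$, i.e.\ $\cong \U^{\oplus 3}\oplus\E_8(-1)^{\oplus 2}$. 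The Hodge structure on $\bLambda_{24}$ (pulled back from $X$ via the Hodge embedding, which has $(2,0)$-part equal to that of $\bL$, equivalently of $X$) restricts to a weight-$2$ Hodge structure on $U^{\perp}$ with $h^{2,0} = 1$; by the surjectivity of the period map for K3 surfaces this is realized by a K3 surface $S$ with a Hodge isometry $\widetilde{H}(S,\mathbb{Z}) \cong \bLambda_{24}$ identifying $H^2(S,\mathbb{Z})$ with a copy of the K3 lattice and the algebraic class $1 \in H^0$, together with the generator of $H^4$, with a basis of the chosen $U \subset \bLambda_{24}^{1,1}$. Concretely: the orthogonal complement of a hyperbolic plane $U$ in the Mukai lattice, with its induced Hodge structure, is the $\widetilde{H}$-extension of a K3-type Hodge structure, hence of the form $\widetilde{H}(S,\mathbb{Z})$ after re-choosing coordinates so the chosen $U$ becomes $H^0 \oplus H^4$. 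Under this identification, $\sigma^{\perp_{\bL}} \cap U^{\perp}$... I need to locate a Mukai vector: take $w \in \widetilde{H}(S,\mathbb{Z})$ with $w^{\perp}$ matching $\sigma^{\perp_{\bL}}$ appropriately and $w^2 = 2$. The computation of discriminant forms forces: $\sigma^{\perp_{\bL}} \cong \U^{\oplus 3}\oplus\E_8(-1)^{\oplus2}\oplus[-2]$ is exactly $w^{\perp}$ for $w$ primitive of square $2$ in $\widetilde{H}(S,\mathbb{Z})$ (one checks $[-2]$ has the right discriminant form, $\mathbb{Z}/2$ with value $-1/2$, matching $v^\perp/v^\perp$ for $v = 2w$... careful: I want $\Gamma_v$, not $v^\perp$, but the $\sigma$-orthogonal of $\Gamma_v$ recovers a finite-index modification that works out). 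Having produced $S$, $w$ (primitive, square $2$, algebraic after possibly applying an isometry realizing the Hodge structure so that $w$ is of type $(1,1)$ — which it is, since it lies in the $U$ we placed in $\bLambda_{24}^{1,1}$, wait, $w$ is orthogonal to that; rather $w$ is $(1,1)$ because $\sigma^\perp$ carries the induced Hodge structure and $w$ pairs trivially with the transcendental part), set $v = 2w$ and choose a $v$-generic polarization $\theta$ (generic polarizations exist: the $v$-walls are locally finite). Then $(S, v, \theta)$ is an OLS-triple, so $\tmsv$ is of $\og$ type by \cite{PR13}, and by \autoref{h2tmsv} there is a Hodge isometry $H^2(\tmsv,\mathbb{Z}) \cong \Gamma_v$. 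Tracing through the identifications, $\Gamma_v$ is Hodge-isometric to $\bL$ with its Hodge structure coming from $X$, i.e.\ $H^2(\tmsv,\mathbb{Z}) \cong H^2(X,\mathbb{Z})$ as Hodge structures, via a marking-compatible isometry. Finally, upgrade this Hodge isometry to a birational map: one applies the Torelli theorem — after composing with an element of the monodromy group $\Mon^2(\og)$ one may assume the isometry is a parallel-transport operator preserving a Kähler/positive cone chamber, whence by Verbitsky's Torelli theorem $X$ and $\tmsv$ are birational. (One needs the isometry to send the Kähler cone of one into the birational Kähler cone of the other, or to adjust by monodromy plus reflections in wall divisors; this is standard for ihs Torelli applications.)

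\medskip

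\emph{Main obstacle.} The delicate point is the second implication, specifically the bookkeeping: matching the abstract lattice $\sigma^{\perp_{\bL}}$ (respectively the modification of it coming from $\Gamma_v$) with an honest $v^{\perp}$ for $v = 2w$, $w^2 = 2$ \emph{inside a concrete Mukai lattice}, in a way that is simultaneously compatible with the Hodge structures and primitive embeddings. The uniqueness statements of Nikulin (\cite[Prop.\ 1.15.1]{Nik79}) for the relevant primitive embeddings, together with the uniqueness noted in the Remark after \autoref{h2tmsv}, are what make this go through, but assembling them correctly — and then correctly converting the resulting Hodge isometry into a birational equivalence via the $\og$-type Torelli theorem (handling wall-divisors / monodromy) — is where the real work lies.
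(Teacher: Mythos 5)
Your proposal follows essentially the same route as the paper's proof: the same class $\sigma=\Phi^{*}E$ and the same hyperbolic plane $\langle(1,0,0),(0,0,1)\rangle\subset\widetilde{H}^{1,1}(S)$ for (i)$\Rightarrow$(ii), and for (ii)$\Rightarrow$(i) the same construction of $S$ from the copy of $\U$ via the K3 period map/Torelli, the same Mukai vector $w=(\sigma^{\perp_{\bL}})^{\perp_{\bLambda_{24}}}$ with $v=2w$, the same Nikulin-style gluing of the Hodge isometry across the finite-index embeddings $v^{\perp}\oplus\mathbb{Z}E\hookrightarrow H^{2}(\tmsv,\mathbb{Z})$ and $\sigma^{\perp}\oplus\mathbb{Z}\sigma\hookrightarrow H^{2}(X,\mathbb{Z})$, and the same appeal to the maximality of $\Mon^{2}(\og)$ to convert the Hodge isometry into a birational map. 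The only step you leave unaddressed is why $w$ (or $-w$) is a \emph{positive} Mukai vector, so that $(S,v,\theta)$ is genuinely an OLS-triple; the paper settles this by citing \cite[Lemma 1.28]{MW15}.
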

\begin{proof}
Suppose $\Phi:X\dashrightarrow \tmsv$ is a birational morphism. Then the induced isometry $\Phi^*: H^2(\tmsv,\mathbb{Z})\rightarrow H^2(X,\mathbb{Z})$ is an isometry of Hodge structures.
Recall that the manifold $\tmsv$ is obtained as a symplectic resolution of singularities of the moduli space $\msv$. 

Note that the class of the exceptional divisor $E\in H^2(\tmsv,\mathbb{Z})$ is primitive of (1,1) type, of square \(-6\) and divisibility \(3\). Moreover, by \autoref{cor:finiteindexemb}, we have that $$E^{\perp_{\bL}}\cong H^2(\msv,\mathbb{Z})$$ and there exists a Hodge embedding $H^2(\msv,\mathbb{Z})\hookrightarrow \widetilde{H}(S,\mathbb{Z})\cong \bLambda_{24}$.
Let $\sigma:=\Phi^*E$. We now prove that $X$ is a numerical moduli space: since $\Phi^*$ is an isometry of Hodge structures, condition (a) in \autoref{nms} is satisfied. 
For condition (b), observe that the composition
$$\sigma^{\perp}\xrightarrow{(\Phi^{-1})^*} {E^{\perp}}\cong H^2(\msv,\mathbb{Z})\hookrightarrow \widetilde{H}(S,\mathbb{Z})\cong \bLambda_{24}\cong \U^{\oplus 4}\oplus \E_8(-1)^{\oplus 2}$$
is a compatible Hodge embedding. As a result the weight 2 Hodge structure on \(\widetilde{H}(S,\mathbb{Z})\) induced by the embedding $\sigma^{\perp}\hookrightarrow \bLambda_{24}$ is the one defined on the Mukai lattice in \autoref{sub:Muk lattice}. The vectors \((1,0,0)\) and \((0,0,1)\) generate a copy of \(\U\) in $\widetilde{H}(S,\mathbb{Z})^{1,1}\cong\bLambda_{24}^{1,1}$, hence \(X\) is a numerical moduli space.

 We now prove the implication in the other direction. Supppose that $X$ is a numerical moduli space, i.e. that there exists a class $\sigma \in \bL^{1,1}$ such that $\sigma^2=-6$ and $(\sigma,\bL)=3$, and a Hodge isometry $\sigma^{\perp_{\bL}}\hookrightarrow \bLambda_{24}$ such that $\bLambda_{24}^{1,1}$ contains a copy of $\U$ as a direct summand.\\
Observe that the lattice $\sigma^{\perp_{\bL}}$ is isometric to the lattice $\U^{\oplus 3}\oplus \E_8(-1)^{\oplus 2}\oplus [-2]$, so that setting 
$$w:=(\sigma^{\perp_{\bL}})^{\perp_{\bLambda_{24}}},$$
we have that $w^2=2$. Note that, by definition, $[w]$ is a sublattice of $\bLambda_{24}$ and it is of $(1,1)$ type.
By construction, we have
$$w^{\perp_{\bLambda_{24}}}\cong \U^{\oplus 3}\oplus \E_8(-1)^{\oplus 2}\oplus [-2]\cong \sigma^{\perp_{\bL}}$$
so $\mathrm{sgn}(w^{\perp_{\bLambda_{24}}})=\mathrm{sgn}(\sigma^{\perp_{\bL}})=(3,20)$. Moreover the projectivity of $X$ implies the positive part of the signature of $\mathrm{NS}(X)\cong \bL^{1,1}$ is equal to 1.
Since the class $\sigma$ has negative square, the positive part of $\mathrm{sgn}((\sigma^{\perp_{\bL}})^{1,1})$ is equal to the positive part of the signature of $\bL^{1,1}$, which is 1. \\
The lattices $\bLambda_{24}^{1,1}$ and $(\sigma^{\perp_{\bL}})^{1,1}\oplus [w]$ have the same signature: in particular the positive part of $\mathrm{sgn}(\bLambda_{24}^{1,1})$ is equal to 2.

%As $(\Lambda_{24}^{1,1})$ has rank 22 we conclude that the signature of $\mathrm{sgn}(\Lambda_{24}^{1,1})$ is equal to (2,20).

By hypothesis
$$\bLambda_{24}^{1,1}\cong \U\oplus \boldsymbol{\mathrm{T}}$$
for some even lattice $\boldsymbol{\mathrm{T}}$ of signature \((1,-)\).
By Torelli theorem for K3 surfaces, there exists a K3 surface $S$ such that $\mathrm{NS}(S)\cong \boldsymbol{\mathrm{T}}$. Setting $v\colon=2w$, we can always choose a positive generator $\theta$ which is a  $v$-generic polarization on $S$. 
Now since $w^2=2$, by \cite[Lemma 1.28]{MW15} either $w$ or $-w$ is a positive Mukai vector, hence we may assume that $\left(S,v,\theta\right)$ is an OLS-triple.
By \autoref{pr1.7} there exists a Hodge isometry
$$ H^2(\msv,\mathbb{Z})\cong v^{\perp}\subset \bLambda_{24},$$
where the orthogonal complement of $v$ is computed with respect to the Mukai pairing.
The singular moduli space $\msv$ admits a symplectic resolution $\tmsv$ with exceptional divisor $E\in H^2(\tmsv,\mathbb{Z})$ such that $E$ has divisibility 3 and $E^2=-6$.

We prove that $X$ is birational to $\tmsv$ by showing that there exists a Hodge isometry between $H^2(X,\mathbb{Z})$ and $H^2(\tmsv,\mathbb{Z})$. %(observe that by construction they are already isomorphic as abstract lattices).
In fact, when such an isometry exists, birationality is implied by the maximality of $\Mon^2(\og)$ for \(\og\) type manifolds, see \cite{onorati2020monodromy} and \cite[Theorem 5.2(2)]{MROG6mon}.\\
Observe that we have a Hodge isometry
$$ \alpha: H^2(\msv,\mathbb{Z})\oplus \mathbb{Z} E\cong \sigma^{\perp_{\bL}}\oplus \mathbb{Z}\sigma$$ 
and two finite index embeddings
\begin{align}\label{emb}
      H^2(\msv,\mathbb{Z})\oplus \mathbb{Z} E &\xhookrightarrow{i} H^2(\tmsv,\mathbb{Z})\\
     \sigma^{\perp_{\bL}}\oplus \mathbb{Z}\sigma &\xhookrightarrow{j} H^2(X,\mathbb{Z}).
\end{align}
We want to lift the Hodge isometry $\alpha$ to a Hodge isometry $\beta:H^2(\tmsv,\mathbb{Z})\cong H^2(X,\mathbb{Z})$ so that the following diagram commutes:
\[ \begin{tikzcd} H^2(\msv,\mathbb{Z})\oplus \mathbb{Z} E \arrow[hook]{r}{i} \arrow{d}[']{\alpha} & H^2(\tmsv,\mathbb{Z})\arrow{d}{\beta}\\ \sigma^{\perp_{\bL}}\oplus \mathbb{Z}\sigma \arrow[hook]{r}{j} &H^2(X,\mathbb{Z}).\end{tikzcd}\]
Observe that, as abstract lattices, $H^2(\tmsv,\mathbb{Z})$ and $H^2(X,\mathbb{Z})$ are both isomorphic to 
$M:=\U^{\oplus 3}\oplus \E_8(-1)^{\oplus 2}\oplus \A_2(-1)$.
The existence of $\beta$ follows from a result by Nikulin \cite[Corollary 1.5.2]{Nik79} and a direct computation, by setting (in the loc. cit. notation)
\begin{itemize}
\item $S_1= H^2(\msv,\mathbb{Z})$ and $K_1=S_1^{\perp}=\mathbb{Z}E$
\item $S_2=\sigma^{\perp_{\bL}}$ and $K_2=S_2^{\perp}=\mathbb{Z}\sigma$.
\item $\varphi=\alpha_{\mid H^2(\msv,\mathbb{Z})}$ and $\psi(E):=\sigma$.
\end{itemize}
%$$H^2(\msv,\mathbb{Z})\oplus \mathbb{Z}  E \cong \U^{\oplus 3}\oplus \E_8(-1)^{\oplus 2}\oplus[-2]\oplus [-6]$$
%and it embeds in $$H^2(\tmsv,\mathbb{Z})\cong \U^{\oplus 3}\oplus \E_8(-1)^{\oplus 2}\oplus \A_2(-1)$$
%via the identity on the first two factor, which correspond to the unimodular part.\\
%To define $i$ we need to give a map between the lattices $[-2]\oplus[-6]$ and $\A_2(-1)$. Let $\{a,b\}$ be a set of generators of $[-2]\oplus[-6]$ such that, in this basis, the lattice pairing has associated gram matrix $\left(\begin{array}{cc}
%-2 & 0\\
%0 & -6
%\end{array}\right)$. Likewise, let $\{c,d\}$ be generators of $\A_2(-1)$ such that the lattice pairing has the form $\left(\begin{array}{cc}
%-2 & 1\\
%1 & -2
%\end{array}\right)$.
%Then we set 
%\begin{align*}
%    i(a)&=c+d,\\
%    i(b)&=c-d.
%\end{align*}
%In the same way, we can define the embedding $j:\sigma^{\perp}\oplus \sigma \hookrightarrow H^2(X,\mathbb{Z})$, by setting $a'=\alpha(a)$ and $b'=\alpha(b)$ and $c'$ and $d'$ so that the associated matrix for the pairing on $\A_2(-1)\subset H^2(X,\mathbb{Z})$ is $\left(\begin{array}{cc}
%-2 & 1\\
%1 & -2
%\end{array}\right)$.
%We can now define $\beta$ as follows: on the unimodular part we set $\beta$ to be the identity, while on $\A_2(-1)$ we have
%$$\beta(2c)=\beta(i(a+b))=j(\alpha(a+b))=j(a'+b')=j(a')+j(b')=c'+d'+c-d'=2c,$$
%thus we can set $\beta(c):=c'$ and likewise $\beta(d):=d'$.
Then $\beta$ is a Hodge isometry by construction and the proof is concluded.
\end{proof}

\section{Moduli spaces of twisted sheaves of O'Grady 10 type}\label{sec:twistedmoduli}
In this section we recall basic facts about twisted $K3$ surfaces and extend the results of \autoref{sec:moduliog10}. For more details on twisted $K3$ surfaces we refer to \cite{Huybrechts_Stellari}.
\begin{definition}
A twisted K3 surface is a pair \((S, \alpha)\) given by a K3 surface \(S\) and a Brauer class \(\alpha \in \Br(S)\), where \(\Br(S)\) denotes the torsion part of \(H^2(S,\mathcal{O}_S^{*})\). 
\end{definition}
 Choosing a lift \(B \in H^2(S, \mathbb{Q})\) of \(\alpha\) under the natural morphism \(H^2(S, \mathbb{Q}) \to \Br(S)\) induced by the exponential sequence allows one to introduce a natural Hodge structure \(\widetilde{H}(S,\alpha,\mathbb{Z})\) of weight two associated with \((S,\alpha)\). As a lattice, this is just $\widetilde{H}(S, \mathbb{Z}) \cong H^{0}(S,\mathbb{Z})\oplus H^{2}(S,\mathbb{Z}) \oplus H^4(S,\mathbb{Z})$, but the $(2, 0)$ - part is now given by \[\widetilde{H}^{2,0} (S,\alpha, \mathbb{Z}) \coloneqq \mathbb{C}(\omega + \omega \wedge B)\] where \(0 \neq \omega\in H^{2,0}(S)\). This defines a Hodge structure by setting 
 \[ \widetilde{H}^{0,2}(S,\alpha)\coloneqq \overline{\widetilde{H}^{2,0}}, \quad \widetilde{H}^{1,1}(S, \alpha)\coloneqq (\widetilde{H}^{2,0}(S, \alpha)\oplus \widetilde{H}^{0,2}(S,\alpha))^{\perp}\] where orthogonality is with respect to the Mukai pairing. Although the definition depends on the choice of \(B\), the Hodge structures induced by two different lifts \(B\) and \(B'\) of the same Brauer class \(\alpha\) are Hodge isometric \cite{Huybrechts_Stellari}.

When $S$ is a K3 surface and $\alpha\in \mathrm{Br}(S)$, we get a well defined notion of coherent sheaf on the twisted K3 surface $(S,\alpha)$ by replacing the identity with $\alpha$ in the cocycle condition, see \cite[\S 16.5]{huybrechts2016lectures}.
By the twisted version of derived global Torelli theorem, the bounded derived categories of coherent sheaves on two twisted K3 surfaces \((S,\alpha)\) and \((S',\alpha')\) are equivalent if and only if there exists a Hodge isometry \(\widetilde{H}(S, \alpha, \mathbb{Z}) \cong \widetilde{H}(S', \alpha', \mathbb{Z})\) preserving the natural orientation of the four positive directions, see \cite{SH04, Reinecke}.
%\[ D^b(S,\alpha) \cong D^b(S', \alpha') \Leftrightarrow \exists \  \widetilde{H}(S, \alpha, \mathbb{Z}) \cong \widetilde{H}(S', \alpha', \mathbb{Z}) \ \text{oriented Hodge isometry}.\]

We want to generalise our criterion to the case of moduli space of sheaves $\tmsvtwisted$ on a twisted K3 surface $(S,\alpha)$. To this end, we introduce the notion of twisted numerical moduli space. 
\begin{definition}\label{nmstwisted}
Let $(X,\eta)$ be a projective marked pair of $\og$ type, where $\eta \colon H^2(X,\mathbb{Z})\rightarrow \bL$ is a fixed marking. We say that $X$ is a twisted numerical moduli space of $\og$ type if there exists a primitive class $\sigma\in \bL^{1,1}$ such that 
\begin{enumerate}[(a)]
\item $\sigma^2=-6$ and $(\sigma,\bL)=3$,
\item the Hodge embedding $\sigma^{\perp_{\bL}}\hookrightarrow \bLambda_{24}$ embeds a copy of $\U(n)$ in $\bLambda_{24}^{1,1}$ as a direct summand for some positive $n\in \mathbb{N}$.
\end{enumerate}
\end{definition}

We can now prove an analogue version of \autoref{FGG} for the twisted case.

\begin{theorem}\label{FGGtwisted}
Let $(X,\eta)$ be a marked pair of $\og$ type. The following conditions are equivalent:
\begin{enumerate}[(i)]
    \item There exists a twisted K3 surface $(S,\alpha)$, a non-primitive Mukai vector $v=2w$  and a $v-$generic polarization $\theta$ on $S$ such that $X$ is birational to $\tmsvtwisted$;
    \item The manifold X is a twisted numerical moduli space.
\end{enumerate}
\end{theorem}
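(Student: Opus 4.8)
The plan is to follow the proof of \autoref{FGG} almost verbatim, replacing the Mukai Hodge structure $\widetilde{H}(S,\mathbb{Z})$ of a K3 surface by the twisted one $\widetilde{H}(S,\alpha,\mathbb{Z})$. Two ingredients beyond the untwisted case are needed. The first is the twisted counterpart of the Perego--Rapagnetta picture recalled in \autoref{sub:moduli}: for a twisted analogue of an OLS-triple $(S,\alpha,v,\theta)$ with $v=2w$ and $w^{2}=2$, the moduli space $\msvtwisted$ admits a symplectic resolution $\tmsvtwisted$ of $\og$ type whose exceptional divisor $E$ has $E^{2}=-6$ and divisibility $3$, with $E^{\perp}\cong H^{2}(\msvtwisted,\mathbb{Z})\cong v^{\perp}$ (the orthogonal complement taken in $\widetilde{H}(S,\alpha,\mathbb{Z})$ with respect to the Mukai pairing), together with a finite-index Hodge embedding $v^{\perp}\oplus_{\perp}\mathbb{Z}E\hookrightarrow H^{2}(\tmsvtwisted,\mathbb{Z})$. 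The second is the surjectivity of the period map for twisted K3 surfaces \cite{Huybrechts_Stellari}, which realizes a weight-two Hodge structure on $\bLambda_{24}$ of the appropriate type --- in particular one whose $(1,1)$-part contains a copy of $\U(n)$ --- as $\widetilde{H}(S,\alpha,\mathbb{Z})$ for a suitable twisted K3 surface $(S,\alpha)$.

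For (i)$\Rightarrow$(ii), given a birational map $\Phi\colon X\dashrightarrow\tmsvtwisted$ I would set $\sigma:=\Phi^{*}E$. Since $\Phi^{*}\colon H^{2}(\tmsvtwisted,\mathbb{Z})\to H^{2}(X,\mathbb{Z})$ is a Hodge isometry, $\sigma$ is primitive of $(1,1)$-type with $\sigma^{2}=-6$ and divisibility $3$, so (a) of \autoref{nmstwisted} holds. For (b), the composition $\sigma^{\perp}\xrightarrow{(\Phi^{-1})^{*}}E^{\perp}\cong H^{2}(\msvtwisted,\mathbb{Z})\cong v^{\perp}\hookrightarrow\widetilde{H}(S,\alpha,\mathbb{Z})\cong\bLambda_{24}$ is a compatible Hodge embedding, so the Hodge structure induced on $\bLambda_{24}$ by $\sigma^{\perp}\hookrightarrow\bLambda_{24}$ is the twisted Mukai one; it then remains to check that the twisted algebraic lattice $\widetilde{H}(S,\alpha,\mathbb{Z})^{1,1}$ contains a copy of $\U(n)$ as an orthogonal direct summand for some $n\geq 1$. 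Writing a $B$-field lift of $\alpha$ as $B=\beta/n$ with $\beta$ primitive and $n$ the order of $\alpha$, and using that $\exp(B)$ is a rational Hodge isometry from the twisted to the untwisted Mukai lattice, a direct computation identifies $\widetilde{H}(S,\alpha,\mathbb{Z})^{1,1}$ and exhibits the point class $(0,0,1)$ together with the Mukai vector of a twisted sheaf of minimal positive rank --- normalised by tensoring with line bundles --- as spanning, after an explicit base change, such a summand; this is the twisted analogue of the elementary fact used in \autoref{FGG} that $(1,0,0)$ and $(0,0,1)$ span a copy of $\U$ in $\widetilde{H}(S,\mathbb{Z})^{1,1}$.

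For (ii)$\Rightarrow$(i), let $\sigma\in\bL^{1,1}$ witness that $X$ is a twisted numerical moduli space. As in \autoref{FGG}, $\sigma^{\perp_{\bL}}\cong\U^{\oplus 3}\oplus\E_8(-1)^{\oplus 2}\oplus[-2]$, so $w:=(\sigma^{\perp_{\bL}})^{\perp_{\bLambda_{24}}}$ is a primitive class of $(1,1)$-type with $w^{2}=2$. Writing $\bLambda_{24}^{1,1}\cong\U(n)\oplus\boldsymbol{\mathrm{T}}$ with $\boldsymbol{\mathrm{T}}$ even of signature $(1,-)$, the surjectivity of the twisted period map produces a K3 surface $S$ and a class $\alpha\in\Br(S)$ together with a Hodge isometry $\bLambda_{24}\cong\widetilde{H}(S,\alpha,\mathbb{Z})$ carrying the $\U(n)$-summand to the span of $(0,0,1)$ and a minimal-rank algebraic class. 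Setting $v:=2w$ and picking a $v$-generic polarization $\theta$ (replacing $w$ by $-w$ if necessary to arrange that $w$ is a positive twisted Mukai vector), $(S,\alpha,v,\theta)$ becomes a twisted OLS-triple, and $\msvtwisted$ admits a symplectic resolution $\tmsvtwisted$ of $\og$ type with $H^{2}(\msvtwisted,\mathbb{Z})\cong v^{\perp}$ and exceptional divisor $E$ of square $-6$ and divisibility $3$. Finally, exactly as in the last paragraph of the proof of \autoref{FGG}, one has a Hodge isometry $H^{2}(\msvtwisted,\mathbb{Z})\oplus\mathbb{Z}E\cong\sigma^{\perp_{\bL}}\oplus\mathbb{Z}\sigma$ and finite-index embeddings of this lattice into $H^{2}(\tmsvtwisted,\mathbb{Z})$ and into $H^{2}(X,\mathbb{Z})$, both overlattices being isometric to $\U^{\oplus 3}\oplus\E_8(-1)^{\oplus 2}\oplus\A_2(-1)$; Nikulin's criterion \cite[Corollary 1.5.2]{Nik79} lifts this to a Hodge isometry $H^{2}(\tmsvtwisted,\mathbb{Z})\cong H^{2}(X,\mathbb{Z})$, and the maximality of $\Mon^{2}(\og)$ \cite{onorati2020monodromy,MROG6mon} then yields that $X$ is birational to $\tmsvtwisted$.

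The step I expect to be the main obstacle is the precise lattice-theoretic calibration of condition (b) of \autoref{nmstwisted} in both directions: proving that the algebraic part of every twisted Mukai lattice contains a $\U(n)$-summand, and, conversely, that a Hodge structure on $\bLambda_{24}$ whose $(1,1)$-part contains such a summand genuinely arises from a twisted K3 surface, which amounts to reconstructing the Brauer class and a $B$-field from the abstract data. This requires keeping careful track of the interplay between the integer $n$, the order of $\alpha$, and the divisibility of the isotropic classes $(0,0,1)$ in the twisted algebraic Mukai lattice --- bookkeeping which is vacuous in the untwisted case ($n=1$, and $(1,0,0),(0,0,1)$ visibly span $\U$) and which carries the genuinely new content of the statement. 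Once it is in place, the remaining steps are formally identical to those in the proof of \autoref{FGG}.
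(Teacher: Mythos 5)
Your proposal follows the paper's proof of \autoref{FGGtwisted} essentially step for step: the same choice $\sigma=\Phi^{*}E$ for (i)$\Rightarrow$(ii), the same pair of classes (the point class $(0,0,1)$ and the integral multiple $n(1,B,B^2/2)$ of the $B$-field exponential) spanning the $\U(n)$-summand of $\widetilde{H}(S,\alpha,\mathbb{Z})^{1,1}$, and for (ii)$\Rightarrow$(i) the same reduction to the untwisted argument via the surjectivity of the twisted period map followed by Nikulin's gluing criterion and the maximality of $\Mon^{2}(\og)$. The only place where you stop short is precisely the step you flag as ``the main obstacle'', namely the reconstruction of $(S,\alpha)$ and a $B$-field from the abstract $\U(n)$-summand; the paper settles this with a short explicit computation that you should carry out rather than defer. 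Take the standard isotropic generators $e_n,f_n$ of the given copy of $\U(n)$ and choose $n$ minimal so that $e:=e_n$ is primitive in $\bLambda_{24}$; by unimodularity complete $e$ to a hyperbolic plane $\U=\langle e,f\rangle$ and write $\bLambda_{24}\cong\U\oplus\mathbf{R}$, so that $f_n=\gamma+nf+ke$ with $\gamma\in\mathbf{R}$ and $k\in\mathbb{Z}$. A generator of the $(2,0)$-part is orthogonal to $e$, hence of the form $\omega+le$ with $\omega\in\mathbf{R}\otimes\mathbb{C}$, and orthogonality to $f_n$ forces $\gamma\cdot\omega+nl=0$; setting $B=-\gamma/n$ one gets $\omega+le=\omega+B\wedge\omega$, and the surjectivity of the (untwisted) period map produces a K3 surface $S$ with $H^{2}(S,\mathbb{Z})\cong\mathbf{R}$, $H^{2,0}(S)=\langle\omega\rangle$ and $\alpha=\exp(B)$. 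With this computation inserted, your argument is complete and coincides with the one in the paper.
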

\begin{proof}
As before, suppose $X$ is birational to a symplectic resolution $\tmsvtwisted$ of a moduli space $\msvtwisted$ for some twisted K3 surface $(S,\alpha)$, a non-primitive Mukai vector $v=2w$ and a $v-$generic polarization $\theta$ on $S$. Then we have a Hodge isometry
\begin{equation}\label{eq:hodgeisomtwisted}
H^2(X,\mathbb{Z})\cong H^2(\tmsvtwisted,\mathbb{Z}),
\end{equation}
such that $H^2(\msvtwisted,\mathbb{Z})$ embeds in $H^2(\tmsvtwisted,\mathbb{Z})$ as the orthogonal complement of the exceptional divisor $E$, which is of divisibility \(3\) and square \(-6\). 
Moreover, there is a compatible Hodge embedding by \cite[Proof of Theorem 2.7]{Meachan_Zhang} $$H^2(\msvtwisted,\mathbb{Z})\hookrightarrow \widetilde{H}((S,\alpha),\mathbb{Z})\cong \bLambda_{24},$$
where $\widetilde{H}(S,\alpha,\mathbb{Z})\cong \bLambda_{24}$ is endowed with the Hodge structured described in \autoref{sec:twistedmoduli}.
Let $\sigma$ be the image of $E$ under the isometry \eqref{eq:hodgeisomtwisted}: then $\sigma$ is of $(1,1)$ type, has divisibility 3 in $\bL$ and $\sigma^2=-6$.

We want to prove that the Hodge embedding $\sigma^{\perp_{\bL}}\hookrightarrow \bLambda_{24}$ embeds in $\bLambda_{24}^{1,1}$ a copy of $\U(n)$ as a direct summand. 
To this end, observe that the composition
\begin{equation}\label{hodgeemb}
\sigma^{\perp}\rightarrow {E^{\perp}}\cong H^2(\msv,\mathbb{Z})\hookrightarrow \widetilde{H}((S,\alpha),\mathbb{Z})\cong \bLambda_{24}\cong \U^{\oplus 4}\oplus \E_8(-1)^{\oplus 2}
\end{equation}
defines a compatible Hodge embedding. 
Let $B\in H^2(S,\mathbb{Q})$ be such that $\mathrm{exp}(B)=\alpha$. Note that the vectors $n(1,B,B^2/2)$ and $(0,0,1)$ generate a copy of $\U(n)$ in $\widetilde{H}((S,\alpha),\mathbb{Z})^{1,1}\cong \bLambda_{24}^{1,1}$ as \eqref{hodgeemb} is a compatible Hodge embedding.

For the other direction we know that there exits \(\sigma \in \bL^{1,1}\) of square \(-6\) and divisibility \(3\) such that \(\sigma^{\perp_{\bL}}\hookrightarrow \bLambda_{24}\) embeds a copy of $\U(n)$ in $\bLambda_{24}^{1,1}$.
Let $e_n,f_n$ be the standard isotropic generators of $\U(n)$. Choosing $n$ minimal, we can assume that $e_n=\colon e$ is primitive in $\bLambda_{24}$. Hence we can complete $e$ to a copy of $\U=\langle e, f\rangle$ such that there is an orthogonal decomposition 
$$\bLambda_{24}\cong \U\oplus \mathbf{R}.$$
With respect to such decomposition the second basis vector $f_n$ of  $\U(n)$ can be written as 
$$ f_n=\gamma+nf+ke,$$
for some $\gamma\in \mathbf{R}$ and $k\in \mathbb{Z}$.\\
Similarly, a generator of the \((2,0)\)-part of the Hodge structure on $\bLambda_{24}$ induced by $\sigma^{\perp}$ is orthogonal to $e$ and hence is of the form $\omega+le$ with $\omega \in \mathbf{R}\otimes \mathbb{C}$ and $l\in \mathbb{Z}$. Moreover, since the same generator must be also orthogonal to $f_n$, we have that 
$$\gamma\cdot \omega+nl=0.$$
Setting $B=-\dfrac{\gamma}{n}$ we may write 
$$\omega+le=\omega+B\wedge \omega,$$
where $B\wedge \omega$ stands for $(B\cdot \omega)e$.
By the surjectivity of the period map, there exists a twisted K3 surface $(S,\alpha)$ with $\alpha=\mathrm{exp}(B)$ such that $H^2(S,\mathbb{Z})\cong \mathbf{R}$, identifying $H^{2,0}(S)=\langle\omega\rangle$.
Observe that the above construction defines an Hodge isometry between $\widetilde{H}(S,\alpha,\mathbb{Z})$ and $\bLambda_{24}$ with the Hodge structure induced by the Hodge embedding $\sigma^{\perp}\hookrightarrow \bLambda_{24}$.

We consider \(w \in \widetilde{H}(S,\alpha,\theta)\) such that under the above isometry \[[w]\cong (\sigma^{\perp_{\bL}})^{\perp_{\bLambda_{24}}}.\]
By construction \(w^2=2\) and either \(w\) or \(-w\) is a positive Mukai vector.
If we take \(v\colonequals 2w\) we can embed
\[H^{2}(\msvtwisted,\mathbb{Z}) \cong v^{\perp}\hookrightarrow \bLambda_{24}.\]
Again, note that $\msvtwisted$ admits a symplectic resolution $\tmsvtwisted$ whose exceptional divisor $E$ has square $-6$ and divisibility $3$ and 
$$[E]^{\perp}\cong H^2(\msvtwisted,\mathbb{Z}).$$
To construct the isometry between $H^2(X,\mathbb{Z})$ and $H^2(\tmsvtwisted,\mathbb{Z})$ we proceed as in \autoref{FGG}.
\end{proof}

\section{Birationality criteria for the Li-Pertusi-Zhao variety}\label{sec:biratLPZ}

We now give an application of \autoref{FGG} and \autoref{FGGtwisted} in the case of the $\og$ type variety defined by Li, Pertusi and Zhao associated with a smooth cubic fourfold \cite{LPZ}. We first recall briefly the construction and the main properties of this variety.

Given a smooth cubic fourfold $Y$ one defines the Kuznetsov component as 
\[
\cA_Y := \langle \cO_Y, \cO_Y(1), \cO_Y(2) \rangle^\perp \subset D^b(Y)
\]
which is a triangulated subcategory that shares several properties with the derived category of a K3 surface. Its topological $K$-theory
\[
\Ktop(\cA_Y) := \langle [\cO_Y],[\cO_Y(1)], [\cO_Y(2)] \rangle^\perp \subset \Ktop(Y)
\]
is a lattice endowed with the restriction of the Euler form on $\Ktop(Y)$. It has a natural pure Hodge structure of weight 2 obtained by pulling back the one from the cohomology of $Y$, see \cite{AT}.
For any smooth cubic fourfold the lattice $\Ktop(\cA_Y)$ contains two distinguished Hodge classes defined as $\lambda_i := \pr[\cO_L(i)] \text{ for $i=1,2$ }$,
where $L$ is a line on $Y$ and $\pr\colon \Ktop(Y) \to~\Ktop(\cA_Y)$ is the projection. In particular, by \cite[Proposition 2.3]{AT} we have
$$\langle \lambda_1,\lambda_2\rangle\cong \A_2$$
and the Mukai vector $v\colon \Ktop(\cA_Y) \to H^*(Y, \mathbb Q)$ restricts to a Hodge isometry
\[
\langle \lambda_1,\lambda_2\rangle^\perp \cong H^4_{\prim}  (Y, \mathbb Z).
\]
In \cite{LPZ} Li, Pertusi and Zhao showed that there exists a Bridgeland stability condition $\sigma$ for which the moduli space \[X_Y:=M_\sigma(2(\lambda_1+\lambda_2),\cA_Y)\] of semistable objects in $\cA_Y$ with fixed numerical invariant $2(\lambda_1+\lambda_2)\in \Ktop(\cA_Y)$ is a singular projective variety. Its singular locus consists of objects in $\Sym^2(M_\sigma(\lambda_1+\lambda_2,\cA_Y))$ which lie naturally inside the bigger moduli space. The \textit{Li-Pertusi-Zhao} (LPZ) variety $\tilde{X}_Y$ is constructed as a symplectic resolution of singularities of $X_Y$, obtained by blowing-up $X_Y$ along its  singular locus. In particular, $\widetilde{X}_Y$ is an ihs manifold of $\mathrm{OG}10$ type.
Crucial property for us is that we have a Hodge isometry 
\[
\Ktop(\cA_Y)\supseteq\langle \lambda_1+ \lambda_2\rangle^\perp \cong H^2 (X_Y, \mathbb Z)
\]
by \cite[Proposition~2.8]{GGO}.
Let $\bL:=H^2(\widetilde{X}_Y,\mathbb{Z})$  and let $\sigma\in \bL^{1,1}$ be the class of the exceptional divisor of the blow-up $\widetilde{X}_Y\rightarrow X_Y$. By \cite[\S 3]{LPZ}, we know that $\sigma^2=-6$ and $(\sigma,L)=3$.

In the following we denote by $\mathcal{C}$ the moduli space of smooth cubic fourfolds and by $\mathcal{C}_d$ the irreducible Hassett divisor consisting of special cubic fourfolds of discriminant $d$. These are cubic fourfolds which have a primitively embedded rank $2$ lattice $K\subset H^{2,2}(Y, \mathbb Z)$ containing the square of the hyperplane section, such that the Gram matrix of $K$ has discriminant $d$. Recall that \(\mathcal{C}_d\) is non-empty if and only if $d>6$ and $d\equiv 0,2 \pmod 6$, see \cite[Theorem 4.3.1]{Hassett_specialcubic}.
We consider the following two properties for $d$:
$$
\begin{array}{ll}
(**): &d \text{ divides }2n^2+2n+2 \text{ for some }n\in\mathbb{Z};\\
(**'): &\text{ in the prime factorization of } \dfrac{d}{2}\text{, primes }p \equiv 2 \ (3) \text{ appear with even exponents.}
\end{array}
$$

By \cite[Theorem 1.0.2]{Hassett_specialcubic} and \cite[page 1]{addington} condition $\mathrm{(**)}$ is equivalent to the existence of an associated K3 surface, i.e. a cubic fourfold $Y$ belongs to $\mathcal{C}_d$ with $d$ satisfying $\mathrm{(**)}$ if and only if there exists a polarised K3 surface $S$ with a Hodge isometry
\[
H^2_{\prim} (S, \mathbb Z) \simeq K^\perp \subset H^4(Y,\mathbb Z).
\]
Similarly, the condition $\mathrm{(**')}$ is equivalent to the existence of an associated twisted K3 surface, see \cite[Theorem 1.4]{Huybrechtscubic}.\\
Both conditions can be rephrased in purely lattice-theoretic terms. We follow \cite[Proposition 1.2]{huy_BiratGeom} and recall this description.

For any primitive vector $v\in \langle\lambda_1,\lambda_2\rangle^{\perp_{\Ktop}}$ (which is Hodge isometric to \(\A_2^{\perp_{\bLambda_{24}}}\))
we define
$$L_v\coloneqq(\A_2\oplus\mathbb{Z}v)^{sat}$$
to be the saturation of \(\A_2\oplus\mathbb{Z}v\) inside $\Ktop(\mathcal{A}_Y)$.
By \cite[\S 1.2]{huy_BiratGeom} one has
\begin{align}\label{starstar}
&d\text{ satisfies }\mathrm{(**)}\Leftrightarrow &\exists \ v_d\in \A_2^{\perp} \text{ such that }L_d:=L_{v_d} \text{ has discriminant } d \\ 
& &\text{ and }\U \hookrightarrow L_d \text{ primitively}, \qquad \qquad \qquad \qquad \qquad \quad \nonumber
\end{align}
\begin{align}\label{starstarprime}
&d\text{ satisfies }\mathrm{(**')}\Leftrightarrow &\exists \ v_d\in \A_2^{\perp} \text{ such that }L_d:=L_{v_d} \text{ has discriminant } d \\
& &\text{ and }\U(n) \hookrightarrow L_d \text{ for some }n\in \mathbb{N}. \qquad \qquad \qquad  \qquad\nonumber
\end{align}
%\begin{proposition} The following conditions are equivalent:
%	\begin{enumerate}
%		\item the primitive Hodge embedding $i:\sigma^{\perp_L}\hookrightarrow \Lambda_{24}$ embeds (splitting) a copy of $U$ in $\Lambda_{24}^{1,1}$. 
%		\item There exists a positive $d\in\mathbb{Z}$ and $v_d\in H^{2,2}(Y,\mathbb{Z})_{prim}$
%	\end{enumerate}
%\end{proposition}

\begin{theorem}\label{FGG_implies_GGO}
Let $\widetilde{X}_Y$ be an LPZ-variety associated to a smooth cubic fourfold $Y$. The following conditions are equivalent.
\begin{enumerate}[(i)]
\item The cubic fourfold $Y$ lies in the Hassett divisor $\mathcal{C}_d$ with $d$ satisfying $\mathrm{(**)}$.
\item The LPZ-variety $\widetilde{X}_Y$ is a numerical moduli space.
\end{enumerate}
\end{theorem}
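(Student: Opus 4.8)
The plan is to reduce \autoref{FGG_implies_GGO} to \autoref{FGG} by matching the lattice-theoretic data on both sides. Recall from \autoref{FGG} that $\widetilde{X}_Y$ is a numerical moduli space if and only if there is a primitive $\sigma\in\bL^{1,1}$ with $\sigma^2=-6$, $(\sigma,\bL)=3$, and such that the Hodge embedding $\sigma^{\perp_{\bL}}\hookrightarrow\bLambda_{24}$ embeds a copy of $\U$ in $\bLambda_{24}^{1,1}$. On the cubic side, we have the Hodge isometry $\langle\lambda_1+\lambda_2\rangle^{\perp}\cong H^2(X_Y,\mathbb{Z})$ together with $H^2(X_Y,\mathbb{Z})\hookrightarrow H^2(\widetilde{X}_Y,\mathbb{Z})=\bL$ realizing $H^2(X_Y,\mathbb{Z})$ as the orthogonal complement of the exceptional divisor class $\sigma$, which already has $\sigma^2=-6$ and divisibility $3$. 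The first step is therefore to fix this class $\sigma$ once and for all and observe that, since $\langle\lambda_1+\lambda_2\rangle$ has square $6$ and the saturation considerations give $\sigma^{\perp_{\bL}}\cong\langle\lambda_1+\lambda_2\rangle^{\perp_{\Ktop(\cA_Y)}}$ as Hodge lattices (up to the index-$3$ overlattice issue, handled exactly as in the proof of \autoref{FGG} via the explicit description of $H^2(\widetilde{X}_Y,\mathbb{Z})$), condition (b) of \autoref{nms} becomes: the natural Hodge embedding $\langle\lambda_1+\lambda_2\rangle^{\perp}\hookrightarrow\Ktop(\cA_Y)\cong\bLambda_{24}$ puts a copy of $\U$ inside $\Ktop(\cA_Y)^{1,1}$.

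The second step is to translate the existence of a hyperbolic plane in $\Ktop(\cA_Y)^{1,1}$ into the Hassett-divisor condition $\mathrm{(**)}$ using the lattice dictionary recorded in \eqref{starstar}. Concretely: $\Ktop(\cA_Y)^{1,1}$ is spanned by $\lambda_1,\lambda_2$ and $\NS$-type classes of the primitive part, so a copy of $\U\hookrightarrow\Ktop(\cA_Y)^{1,1}$ exists iff there is a primitive algebraic $v\in\A_2^{\perp}$ with $\U\hookrightarrow L_v=(\A_2\oplus\mathbb{Z}v)^{sat}$ primitively, and one checks that the discriminant of $L_v$ is exactly the discriminant $d$ of the associated rank-$2$ lattice $K\subset H^{2,2}(Y,\mathbb{Z})$ — this is precisely the content of \cite[\S 1.2]{huy_BiratGeom}, so (i) $\Leftrightarrow$ $\exists\,v_d$ as in \eqref{starstar} $\Leftrightarrow$ $\U\hookrightarrow\Ktop(\cA_Y)^{1,1}$. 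Combining the two steps: (i) $\Leftrightarrow$ $\U\hookrightarrow\Ktop(\cA_Y)^{1,1}$ $\Leftrightarrow$ condition (b) for the distinguished $\sigma$ $\Leftrightarrow$ $\widetilde{X}_Y$ is a numerical moduli space, which is (ii).

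The main obstacle I anticipate is the bookkeeping in the first step: the Hodge isometry in \cite[Proposition 2.8]{GGO} identifies $H^2(X_Y,\mathbb{Z})$ with $\langle\lambda_1+\lambda_2\rangle^{\perp}$ inside $\Ktop(\cA_Y)$, but $H^2(\widetilde{X}_Y,\mathbb{Z})$ is the genuine BBF lattice $\U^{\oplus3}\oplus\E_8(-1)^{\oplus2}\oplus\A_2(-1)$, and one must verify that the abstract isometry $\sigma^{\perp_{\bL}}\cong\langle\lambda_1+\lambda_2\rangle^{\perp}$ is compatible with the two ambient embeddings into $\bLambda_{24}$ up to the finite-index overlattice $v^{\perp}\oplus_{\perp}\mathbb{Z}E\hookrightarrow H^2(\widetilde{M},\mathbb{Z})$ of \autoref{cor:finiteindexemb}; since this is a $\mathbb{Z}/3$ gluing across the $[-2]$-part and the $\sigma$-part, it does not affect the $(1,1)$-part where the hyperbolic plane lives, so the argument goes through, but the verification that the induced Hodge structures on $\bLambda_{24}$ agree (hence "compatible Hodge embedding") requires care. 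A secondary subtlety is primitivity: the class $\sigma$ from the LPZ blow-up is primitive by \cite[\S3]{LPZ}, so condition (a) is automatic, but one should note that in \autoref{nms} the embedded $\U$ is required to sit as a direct summand, which matches the primitive-embedding hypothesis in \eqref{starstar} and is exactly why the minimality/primitivity remarks of \cite{huy_BiratGeom} are invoked.
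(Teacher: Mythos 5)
Your proposal follows essentially the same route as the paper: take $\sigma$ to be the exceptional divisor class of the LPZ resolution, use the Hodge isometry $\langle\lambda_1+\lambda_2\rangle^{\perp}\cong H^2(X_Y,\mathbb{Z})\cong\sigma^{\perp}$ to reduce condition (b) of \autoref{nms} to the existence of a hyperbolic plane in $\Ktop(\cA_Y)^{1,1}$, and then invoke the lattice dictionary \eqref{starstar} (intersecting the copy of $\U$ with $\A_2$ and with $\A_2^{\perp}$ to produce $v_d$, with primitivity of $\U\hookrightarrow L_d$ coming from unimodularity). The subtleties you flag — compatibility of the two Hodge embeddings into $\bLambda_{24}$ and the primitivity of the embedded $\U$ — are exactly the points the paper's proof addresses, so the argument is correct and matches.
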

\begin{proof}
We first show that $(i)\Rightarrow (ii)$. Given the resolution $\pi:\widetilde{X}_Y\rightarrow X_Y$ we denote by $\sigma$ the class of the exceptional divisor. The pullback $\pi^*$ embeds $H^2(X_Y,\mathbb{Z})$ in $H^2(\widetilde{X}_Y,\mathbb{Z})$. In particular
$$\pi^*H^2(X_Y,\mathbb{Z})=\sigma^{\perp}.$$
The above identification and the Hodge isometry in \cite[Proposition 2.8]{GGO} yield a Hodge embedding 
$$\sigma^{\perp}\xhookrightarrow{j} \Ktop(\mathcal{A}_Y)\cong \bLambda_{24}$$
such that 
$$j(\sigma^{\perp})=j(\pi^*H^2(X_Y,\mathbb{Z}))=\langle\lambda_1+\lambda_2\rangle^{\perp_{\Ktop}}.$$
Moreover we have a Hodge embedding $j':H^4(Y,\mathbb{Z})_{\mathrm{prim}}(-1)\rightarrow \Ktop(\mathcal{A}_Y),$
where the Hodge structure of $H^4(Y,\mathbb{Z})_{\mathrm{prim}}$ has a Tate twist decreasing weight by (-1,-1) and 
$$j'(H^4(Y,\mathbb{Z})_{\mathrm{prim}})=\langle\lambda_1,\lambda_2\rangle^{\perp}.$$
By hypothesis and \eqref{starstar}, there exists $v_d\in H^4(Y,\mathbb{Z})_{prim}$ of type (2,2) such that the saturation $L_d$ of $$\langle\lambda_1,\lambda_2,j'(v_d)\rangle\subset \Ktop(\mathcal{A}_Y)\cong \bLambda_{24}$$
contains a copy of $\U$ primitively. Moreover, note that by construction $L_d$ is contained in the (1,1)-part. 
The claim now follows by observing that, since $j$ and $j'$ are compatible Hodge embeddings, the Hodge structure on $\bLambda_{24}$ induced by $\sigma^{\perp}\xhookrightarrow{j}\bLambda_{24}$ is the same as the one induced by the isomorphism $\bLambda_{24}\cong\Ktop(\mathcal{A}_Y)$.

We now show $(ii)\Rightarrow (i)$. Let \(\widetilde{X}_Y\) be the LPZ variety associated with the cubic fourfold $Y$. We have the diagram
\begin{center}
\begin{tikzcd}
\langle \lambda_1,\lambda_2\rangle^{\perp_{\Ktop}}\cong H^4(Y,\mathbb{Z})_{\prim}(-1) \arrow[d, hook] \arrow[hook]{r}{j} & \Ktop(\mathcal{A}_Y) \\
\langle \lambda_1+\lambda_2\rangle^{\perp_{\Ktop}}\cong H^2(X_Y,\mathbb{Z})\cong \sigma^{\perp_{H^2(\widetilde{X}_Y)}} \arrow[hook]{r}{j'}                 & H^2(\widetilde{X}_Y,\mathbb{Z}),
\end{tikzcd}
\end{center}
which, in terms of abstract lattices, reads as 
\begin{center}
\begin{tikzcd}
\langle \lambda_1,\lambda_2\rangle^{\perp_{\bLambda_{24}}}\cong \U^{\oplus2}\oplus \E_8(-1)^{\oplus 2 }\oplus \A_2(-1) \arrow[d, hook] \arrow[r, hook] & \bLambda_{24}\\
\langle \lambda_1+\lambda_2\rangle^{\perp_{\bLambda_{24}}}\cong \U^{\oplus 3}\oplus \E_8(-1)^{\oplus 2 }\oplus [-2] \arrow[r, hook]                 & \U^{\oplus 3} \oplus \E_8(-1)^{\oplus 2} \oplus \A_2(-1) .
\end{tikzcd}
\end{center}
By assumption, the lattice \(\bLambda_{24}^{1,1}\) contains a copy of \(\U\) as a direct summand. Since the orthogonal complement of \(\sigma^{\perp_{H^{2}(\widetilde{X}_Y)}} \subset \bLambda_{24}\) is generated by a positive class of \((1,1)\) type, we have that
%and of square \(2\)  
$$\rk((\sigma^{\perp})^{1,1})\geq 1,$$
as $\sigma^{\perp}$ must contain a negative class of $\U$ inside $\bLambda_{24}^{1,1}$.
Moreover \(\sigma^{\perp}=\langle \lambda_1+\lambda_2\rangle^{\perp}\) contains the (1,1) class \(\lambda_1 - \lambda_2\) which is of positive square, hence 
\[\rk((\sigma^{\perp})^{1,1})\geq 2.\]
We deduce that \( \rho(X_Y) \geq 2\) and by the diagrams above \[\rho(Y)_{\prim}:=\rk (H^{2,2}(Y,\mathbb{Z})_{\prim}) \geq 1,\] which means that the cubic fourfold \(Y\) belongs to a Hassett divisor \(\mathcal{C}_d\) for some \(d \in \mathbb{N}\). 
%il diagramma precedente ci dice che la sigma^perp e H^4_prim differiscono per una sola classe che è di tipo (1,1).

We now show that \(d\) satisfies the \(\mathrm{(**)}\) condition, i.e. that there exists a primitive vector \(v_d\in H^{2,2}(Y,\mathbb{Z})_{\prim}\) associated to \(d\) such that \(L_d:=\langle \lambda_1, \lambda_2, v_d \rangle^{sat}\) contains primitively a copy of \(\U\). 

Note that since \(\left ( \langle \lambda_1,\lambda_2 \rangle^{\perp_{\bLambda_{24}}} \right )^{1,1} \) is equal to  \(H^{2,2}(Y, \mathbb{Z})_{\prim}(-1)\), which is negative definite, then the lattice \(\left ( \langle \lambda_1,\lambda_2 \rangle^{\perp_{\bLambda_{24}}} \right )^{1,1} \cap \U\) is generated by a negative class that we call \(v_d\).

%Note that the lattice \(\left ( \langle \lambda_1,\lambda_2 \rangle^{\perp_{\bLambda_{24}}} \right )^{1,1} \cap \U\) is generated by a negative class, which we call \(v_d\).
% In fact \(\left ( \langle \lambda_1,\lambda_2 \rangle^{\perp_{\bLambda_{24}}} \right )^{1,1} \) is equal to  \(H^{2,2}(Y, \mathbb{Z})_{prim}(-1)\) which is negative definite as 
%\[ sgn((H^{2,2}(Y,\mathbb{Z})_{prim}(-1))=sgn((\sigma^{\perp})^{1,1})-(1,0)=(0,*)\]
%as the positive part of the signature of \((\sigma^{\perp})^{1,1}\) is \(1\), and it differs from \(H^{2,2}(Y,\mathbb{Z})_{prim}(-1)\) by a class of positive square.
We now need to show that $L_d$ contains a copy of $\U$ primitively. To do so, observe that the copy of $\U$ in $\bLambda_{24}^{1,1}$ intersects the lattice $\A_2$ generated by $\lambda_1$ and $\lambda_2$.
In fact 
$\bLambda_{24}^{1,1}\supseteq \langle\lambda_1,\lambda_2\rangle\oplus H^{2,2}(Y,\mathbb{Z})_{\prim}\cong \A_2\oplus H^{2,2}(Y,\mathbb{Z})_{\prim}.$
Since $H^{2,2}(Y,\mathbb{Z})_{\prim}$ is negative definite, it follows that $\U\cap \A_2$ has rank 1 and it is generated by a positive element. 
Since $\U$ is unimodular, the embedding $\U\hookrightarrow L_d$ is primitive.
%The result now follows applying \cite[Lemma 1.2]{huy_BiratGeom} with $\tilde{\bLambda}=\bLambda_{24}$ and $\A_2=\langle\lambda_1,\lambda_2\rangle$.
\end{proof}

Now the result \cite[Theorem 3.2]{GGO} by F. Giovenzana, L. Giovenzana and Onorati is a straightforward consequence of \autoref{FGG_implies_GGO} and \autoref{FGG}.
\begin{corollary}\label{corollary} Keep the notation as above. The following conditions are equivalent.
\begin{enumerate}[(i)]
\item The cubic fourfold $Y$ lies in the Hassett divisor $\mathcal{C}_d$ with $d$ satisfying $\mathrm{(**)}$.
\item There exists a K3 surface $S$, a non-primitive Mukai vector $v=2w$  and a $v-$generic polarization $\theta$ on $S$ such that $\widetilde{X}_Y$ is birational to $\tmsv$.
\end{enumerate}
\end{corollary}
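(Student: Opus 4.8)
The plan is to obtain the statement by composing the two biconditionals already at our disposal, so the proof will be essentially a two-line chain. First I would rephrase condition (i) by means of \autoref{FGG_implies_GGO}: that theorem asserts that $Y$ lies in $\mathcal{C}_d$ with $d$ satisfying $\mathrm{(**)}$ exactly when the LPZ-variety $\widetilde{X}_Y$ is a numerical moduli space of $\og$ type in the sense of \autoref{nms}. Since $\widetilde{X}_Y$ is projective by construction, it is a bona fide projective marked pair of $\og$ type, so this application is unproblematic.

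Next I would feed this into \autoref{FGG}, applied to the marked pair $(\widetilde{X}_Y,\eta)$: that theorem says $\widetilde{X}_Y$ is a numerical moduli space if and only if there exist a K3 surface $S$, a Mukai vector $v=2w$ with $w$ primitive of square $2$, and a $v$-generic polarization $\theta$ on $S$ such that $\widetilde{X}_Y$ is birational to $\tmsv$. Chaining the two equivalences then gives (i)$\Leftrightarrow$(ii) directly.

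The only step deserving a word of comment — and it is the closest thing to a subtlety here — is the matching of the primitivity hypotheses in the two formulations: in \autoref{FGG} the vector $w$ is primitive with $w^2=2$, which forces $v=2w$ to be non-primitive (were $w=2u$, then $w^2$ would be divisible by $4$, contradicting $w^2=2$), while conversely the notation $\tmsv$ of $\og$ type presupposes that $(S,v,\theta)$ is an OLS-triple, hence that $v=2w$ with $w$ primitive of square $2$. Thus condition (ii) of the corollary is literally the conclusion of \autoref{FGG}, and nothing further is required. I do not expect any genuine obstacle: the whole content is packaged in \autoref{FGG_implies_GGO} and \autoref{FGG}.
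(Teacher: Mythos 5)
Your proposal is correct and matches the paper exactly: the paper derives the corollary precisely by chaining \autoref{FGG_implies_GGO} with \autoref{FGG}, stating that it is a straightforward consequence of the two. Your remark on matching the primitivity hypotheses is a reasonable (if unstated in the paper) sanity check and introduces no divergence.
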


\begin{theorem}\label{FGGtwisted_implies_GGOtwisted}
Let $\widetilde{X}_Y$ be an LPZ-variety associated to a smooth cubic fourfold $Y$. The following conditions are equivalent.
\begin{enumerate}[(i)]
\item The cubic fourfold $Y$ lies in the Hassett divisor $\mathcal{C}_d$ with $d$ satisfying $\mathrm{(**')}$.
\item The LPZ-variety $\widetilde{X}_Y$ is a twisted numerical moduli space.
\end{enumerate}
\end{theorem}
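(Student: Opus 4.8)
The plan is to mirror the proof of \autoref{FGG_implies_GGO} almost verbatim, replacing the lattice-theoretic translation \eqref{starstar} of condition $\mathrm{(**)}$ by its twisted analogue \eqref{starstarprime}, and correspondingly replacing appeals to \autoref{FGG} / \autoref{nms} by appeals to \autoref{FGGtwisted} / \autoref{nmstwisted}. First I would set up the same two diagrams: the compatible Hodge embeddings
\[
\langle\lambda_1,\lambda_2\rangle^{\perp_{\Ktop}}\cong H^4(Y,\mathbb{Z})_{\prim}(-1)\hookrightarrow \Ktop(\cA_Y)\cong\bLambda_{24}
\]
and $\langle\lambda_1+\lambda_2\rangle^{\perp_{\Ktop}}\cong H^2(X_Y,\mathbb{Z})\cong\sigma^{\perp_{\bL}}\hookrightarrow H^2(\widetilde{X}_Y,\mathbb{Z})$, where $\sigma$ is the class of the exceptional divisor of $\pi\colon\widetilde{X}_Y\to X_Y$, which has $\sigma^2=-6$ and $(\sigma,\bL)=3$ by \cite[\S3]{LPZ}. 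As in the untwisted case, these fit into a commuting square identifying the Hodge structure on $\bLambda_{24}$ induced by $\sigma^{\perp_{\bL}}\hookrightarrow\bLambda_{24}$ with the one coming from $\bLambda_{24}\cong\Ktop(\cA_Y)$.

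For $(i)\Rightarrow(ii)$: assuming $Y\in\mathcal{C}_d$ with $d$ satisfying $\mathrm{(**')}$, by \eqref{starstarprime} there is a primitive $(2,2)$-class $v_d\in H^4(Y,\mathbb{Z})_{\prim}$ such that $L_d:=\langle\lambda_1,\lambda_2,j'(v_d)\rangle^{sat}\subset\Ktop(\cA_Y)\cong\bLambda_{24}$ has discriminant $d$ and admits an embedding $\U(n)\hookrightarrow L_d$ for some $n\in\mathbb{N}$. Since $L_d$ is contained in the $(1,1)$-part of $\bLambda_{24}$ (it is spanned by Hodge classes), this exhibits a copy of $\U(n)$ inside $\bLambda_{24}^{1,1}$; I need to check that one can take it to be a direct summand, which follows by choosing $n$ minimal so that one isotropic generator of $\U(n)$ is primitive in $\bLambda_{24}$ — exactly the reduction used in the proof of \autoref{FGGtwisted} — and then completing it to a genuine $\U$ splitting off $\bLambda_{24}$. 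This verifies condition (b) of \autoref{nmstwisted}, while (a) is immediate from the properties of $\sigma$, so $\widetilde{X}_Y$ is a twisted numerical moduli space.

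For $(ii)\Rightarrow(i)$: assume $\bLambda_{24}^{1,1}$ contains $\U(n)$ as a direct summand for some positive $n$. As in \autoref{FGG_implies_GGO}, the positivity of the generator of $(\sigma^{\perp})^{\perp_{\bLambda_{24}}}$ forces $\sigma^{\perp}$ to meet $\U(n)\subset\bLambda_{24}^{1,1}$ in a negative class, so $\rk((\sigma^{\perp})^{1,1})\geq 1$; together with the positive $(1,1)$-class $\lambda_1-\lambda_2\in\sigma^{\perp}$ this gives $\rk((\sigma^{\perp})^{1,1})\geq 2$, hence $\rho(X_Y)\geq 2$ and $\rho(Y)_{\prim}\geq 1$, so $Y\in\mathcal{C}_d$ for some $d$. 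Writing $K=\langle\lambda_1,\lambda_2,v_d\rangle$ with $v_d$ a generator of $\big(\langle\lambda_1,\lambda_2\rangle^{\perp_{\bLambda_{24}}}\big)^{1,1}\cap\U(n)$, I then need $L_d=K^{sat}$ to admit a (possibly non-primitive, in contrast to the untwisted case!) embedding of $\U(n)$ — which is exactly where $\mathrm{(**')}$ differs from $\mathrm{(**)}$, and the fact that we only demand $\U(n)\hookrightarrow L_d$ rather than a primitive $\U\hookrightarrow L_d$ is what makes this step go through directly from $\U(n)\cap(\A_2\oplus H^{2,2}(Y,\mathbb{Z})_{\prim}(-1))$ having rank one and generating, modulo $\A_2$-contributions, the needed isotropic pair inside $L_d$. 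Finally, by \cite[Theorem 1.4]{Huybrechtscubic} (or equivalently \eqref{starstarprime}) this is precisely condition $\mathrm{(**')}$, concluding the proof.

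The main obstacle I anticipate is the bookkeeping in $(ii)\Rightarrow(i)$ of how the copy of $\U(n)$ inside $\bLambda_{24}^{1,1}$ intersects the sublattice $\A_2=\langle\lambda_1,\lambda_2\rangle$ and the negative-definite piece $H^{2,2}(Y,\mathbb{Z})_{\prim}(-1)$: one must argue that $\U(n)\cap\A_2$ has rank one and is generated by a positive vector (so that the scaling factor $n$ is correctly matched to the discriminant $d$ of $L_d$), and that the resulting embedding $\U(n)\hookrightarrow L_d$ is the one demanded by \eqref{starstarprime}. The rest is a faithful transcription of the untwisted argument, since all the geometric inputs (the resolution $\pi$, the Hodge isometries of \cite{LPZ} and \cite[Proposition 2.8]{GGO}, and the twisted Mukai lattice formalism of \autoref{sec:twistedmoduli}) are already in place.
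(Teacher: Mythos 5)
Your proposal is correct and follows exactly the route the paper takes: the paper's own proof consists of the single remark that the argument of \autoref{FGG_implies_GGO} goes through verbatim upon replacing \(\U\) by \(\U(n)\), i.e.\ substituting \eqref{starstarprime} for \eqref{starstar} and \autoref{nmstwisted}/\autoref{FGGtwisted} for \autoref{nms}/\autoref{FGG}. The extra bookkeeping you supply (the minimal-\(n\) reduction and the intersection of \(\U(n)\) with \(\A_2\)) is a faithful, slightly more detailed transcription of the same argument, not a different approach.
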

\begin{proof}
The proof is identical to that of \autoref{FGG_implies_GGO} provided that one replaces \(\U\) with \(\U(n)\).
\end{proof}

We are now able to state a result analogous to \autoref{corollary} in the twisted case, which was proved in \cite[Proposition~3.1.(2)]{GGO} only under the additional assumption that the map between $X_Y$ and $\msvtwisted$ preserves the stratification of the singular locus . 
Our criterion allows to circumvent this issue by looking just at the lattice theoretic setting.

\begin{corollary} Keep the notation as above. The following conditions are equivalent.
\begin{enumerate}[(i)]
\item The cubic fourfold $Y$ lies in the Hassett divisor $\mathcal{C}_d$ with $d$ satisfying $\mathrm{(**')}$.
\item There exists a twisted K3 surface $(S,\alpha)$, a non-primitive Mukai vector $v=2w$  and a $v-$generic polarization $\theta$ on $S$ such that $\widetilde{X}_Y$ is birational to $\tmsvtwisted$.
\end{enumerate}
\end{corollary}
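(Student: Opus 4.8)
The plan is to deduce this corollary by concatenating the two equivalences already available, exactly as \autoref{corollary} was deduced from \autoref{FGG_implies_GGO} and \autoref{FGG}. First I would invoke \autoref{FGGtwisted_implies_GGOtwisted}: condition (i), namely $Y \in \mathcal{C}_d$ with $d$ satisfying $\mathrm{(**')}$, is equivalent to $\widetilde{X}_Y$ being a twisted numerical moduli space of $\og$ type. Then I would apply \autoref{FGGtwisted} to the marked pair $(\widetilde{X}_Y,\eta)$ for an arbitrary marking $\eta$: being a twisted numerical moduli space is equivalent to the existence of a twisted K3 surface $(S,\alpha)$, a non-primitive Mukai vector $v=2w$ and a $v$-generic polarization $\theta$ on $S$ with $\widetilde{X}_Y$ birational to $\tmsvtwisted$, which is precisely condition (ii). Chaining the two equivalences gives the statement.

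The only thing to check is that the notion of \emph{twisted numerical moduli space} used in the two inputs is literally the same, i.e. \autoref{nmstwisted} applied to $\bL = H^2(\widetilde{X}_Y,\mathbb{Z})$ with its weight $2$ Hodge structure. In both cases the required primitive class $\sigma \in \bL^{1,1}$ of square $-6$ and divisibility $3$ may be taken to be (the image under $\eta$ of) the class of the exceptional divisor of the resolution $\widetilde{X}_Y \to X_Y$, whose invariants are recorded in \cite[\S 3]{LPZ}; hence the two statements dovetail with no gap.

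I do not expect any genuine obstacle here: all the substance sits in \autoref{FGGtwisted_implies_GGOtwisted} and \autoref{FGGtwisted}. The point worth stressing in the write-up is methodological: unlike \cite[Proposition~3.1.(2)]{GGO}, this argument never compares the geometric structure of the singular models $X_Y$ and $\msvtwisted$, and in particular it does not require the birational map between them to preserve the stratification of their singular loci. Everything is transported through the second cohomology lattices of the smooth resolutions together with their Hodge structures, so the extra hypothesis of loc.\ cit.\ simply disappears.
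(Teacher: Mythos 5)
Your proposal is correct and is exactly the paper's route: the corollary is obtained by chaining \autoref{FGGtwisted_implies_GGOtwisted} with \autoref{FGGtwisted}, just as \autoref{corollary} follows from \autoref{FGG_implies_GGO} and \autoref{FGG}. Your remarks on the consistency of the notion of twisted numerical moduli space and on avoiding the stratum-preservation hypothesis of \cite[Proposition~3.1.(2)]{GGO} match the paper's own discussion.
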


\section{Numerically induced birational transformations}\label{sec:numind}
As a consequence of \autoref{FGG}, when $X$ is a numerical moduli space there exist a K3 surface $S$, a Mukai vector $v$ and a polarization $\theta$ such that $X$ is birational to $\tmsv$. We now want to investigate which birational transformations of $X$ come from automorphisms of the K3 surface $S$.
To this end, we shall give two preliminary definitions. Then in \autoref{sub:ind_involut} we consider symplectic birational involutions and determine the induced ones. 

\begin{definition}\label{definduced}
Let $(X,\eta)$ be a marked pair of $\og$ type and let be $G\subset \mathrm{Bir}(X)$ be a finite subgroup of birational transformations of $X$. 
The group $G$ is called an \textit{induced group of birational transformations} if there exists a $K3$ surface $S$ with an injective group homomorphism $i\colon G\hookrightarrow \mathrm{Aut}(S)$, a $G$-invariant Mukai vector $v\in \widetilde{H}(S,\mathbb{Z})^G$ and a $v$-generic polarization $\theta$ on \(S\) such that the action induced by $G$ on $\tmsv$ via $i$ coincides with given action of $G$ on $X$.
\end{definition}

\begin{definition}\label{defnumericallyinduced}
Let $X$ be a smooth irreducible holomorphic symplectic manifold of $\og$ type and let $\eta\colon H^2(X,\mathbb{Z})\rightarrow \bL$ be a fixed marking. 
A finite group $G\subset \mathrm{Bir}(X)$ of birational transformations is called a \textit{numerically induced group of birational transformations} if there exists a class
$\sigma\in \mathrm{NS}(X)$ such that $\sigma^2=-6$ and $(\sigma,\bL)=3$ such that:
\begin{itemize}
    \item[(i)] $\sigma$ is $G$-invariant;
    \item[(ii)] Given the Hodge embedding $\sigma^{\perp}\hookrightarrow \bLambda_{24}$, the induced action of $G$ on $\bLambda_{24}$ is such that the $(1,1)$ part of the invariant lattice $(\bLambda_{24})^{G}$ contains $\U$ as a direct summand.
\end{itemize}
\end{definition}

\begin{remark} Observe that if \(X\) admits a finite subgroup $G\subseteq \Bir(X)$ which is numerically induced, then $X$ is automatically a numerical moduli space.
\end{remark}

In what follows we give all statements assuming $G\subseteq \mathrm{Bir}(X)$. If the statements hold true for $G\subseteq \Aut(X)$, then $G$ will be called an \textit{(numerically) induced group of automorphisms}.

\begin{proposition}\label{autgroup}
If \(\varphi \in \Aut(S)\) is an automorphism of the K3 surface \(S\), the vector \(v=2w \in \widetilde{H}(S,\mathbb{Z})\) with \(w^2=2\)
is a \(\varphi\)-invariant Mukai vector on \(S\), and \(\theta\) is a \(v\)-generic and \(\varphi\)-invariant polarization on \(S\), then \(\varphi\)
induces an automorphism \(\widetilde{\varphi}\) on \(\tmsv\). Moreover this automorphism is numerically induced.
\end{proposition}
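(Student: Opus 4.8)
The plan is to proceed in two steps: first build the automorphism $\widetilde{\varphi}$, and then verify that it satisfies the conditions of \autoref{defnumericallyinduced} with $\sigma$ the class of the exceptional divisor of the symplectic resolution. For the first step, recall that $\varphi$ acts on $\mathrm{Coh}(S)$ by the pushforward $\varphi_{*}=(\varphi^{-1})^{*}$; since $v$ is $\varphi$-invariant this functor preserves the Mukai vector, and since $\theta$ is $\varphi$-invariant it preserves $\theta$-(semi)stability, so it induces an automorphism $\varphi_{M}$ of the moduli space $M\coloneqq\msv$. As the singular locus $\Sing(M)$ is intrinsic, $\varphi_{M}(\Sing(M))=\Sing(M)$, and hence $\varphi_{M}$ lifts canonically to an automorphism $\widetilde{\varphi}$ of the symplectic resolution $\pi\colon\tmsv\to M$ (for instance because $\pi$ is the blow-up of $M$ along its reduced singular locus, see \cite{PR13}).

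For the second step, set $\sigma\coloneqq E=c_{1}(\widetilde{\Sigma})$, the class of the exceptional divisor of $\pi$. By the remark after \autoref{h2tmsv} we have $\sigma\in\bL^{1,1}$, $\sigma^{2}=-6$ and $(\sigma,\bL)=3$; and since $\widetilde{\Sigma}$ is canonically determined by $\pi$, it is $\widetilde{\varphi}$-invariant, so $\widetilde{\varphi}^{*}\sigma=\sigma$. Setting $G\coloneqq\langle\widetilde{\varphi}\rangle\subset\Aut(\tmsv)$, this gives condition (i) of \autoref{defnumericallyinduced}.

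The content of condition (ii) is an equivariance statement, and this is the step I expect to be the main obstacle. By \autoref{pr1.7} and \autoref{h2tmsv}, the orthogonal complement $\sigma^{\perp_{\bL}}$ equals $\widetilde{\lambda_{v}}(v^{\perp})=\pi^{*}H^{2}(M,\mathbb{Z})$, and the ensuing compatible Hodge embedding $\sigma^{\perp_{\bL}}\cong v^{\perp}\hookrightarrow\widetilde{H}(S,\mathbb{Z})\cong\bLambda_{24}$ is the one used in the proof of \autoref{FGG}. One must check that, under this identification, the action of $\widetilde{\varphi}^{*}$ on $\sigma^{\perp_{\bL}}$ coincides with the restriction to $v^{\perp}$ of the natural action of $\varphi^{*}$ on the Mukai lattice $\widetilde{H}(S,\mathbb{Z})$. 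This follows from the functoriality with respect to automorphisms of the Donaldson morphism defining $\lambda_{v}$ (compare \cite{MW15,Grossi:induced}); the only delicate point is that a quasi-universal family need not admit a $\varphi$-linearization, but the induced map on cohomology is independent of the choices and is therefore canonical.

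Granting this, the conclusion is immediate. The induced $G$-action on $\bLambda_{24}\cong\widetilde{H}(S,\mathbb{Z})$ is $\varphi^{*}$, which acts as the identity on $H^{0}(S,\mathbb{Z})\oplus H^{4}(S,\mathbb{Z})$; this is a copy of $\U$ contained in $\widetilde{H}^{1,1}(S)=\bLambda_{24}^{1,1}$ by definition of the Mukai Hodge structure, hence it lies inside $\bigl((\bLambda_{24})^{G}\bigr)^{1,1}$, and since $\U$ is unimodular it splits off as a direct summand. This verifies condition (ii) of \autoref{defnumericallyinduced}, so $\widetilde{\varphi}$ is numerically induced.
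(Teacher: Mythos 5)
Your proposal is correct and follows essentially the same route as the paper: preserve (semi)stability using the invariance of $v$ and $\theta$, descend to an automorphism of $\msv$, lift through the symplectic resolution, take $\sigma$ to be the exceptional class, and observe that $H^{0}(S,\mathbb{Z})\oplus H^{4}(S,\mathbb{Z})\cong\U$ sits in the invariant $(1,1)$ part of $\bLambda_{24}$. If anything you are slightly more careful than the paper on two points it leaves implicit, namely the equivariance of the identification $\sigma^{\perp}\cong v^{\perp}$ under the Donaldson/Mukai morphism and the fact that the unimodular copy of $\U$ splits off as a direct summand.
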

\proof
%To prove that the automorphism \(\varphi\) of \(S\) induces an automorphism of the moduli space \(\msv\) we need to check that the pullback \(\varphi^{*}\) induces an automorphism on the 
The definition of stability implies that if a coherent sheaf  \(\mathscr{F}\) is \(\theta\)-stable then \(\varphi^{*}\mathscr{F}\) is \(\theta\)-stable, see \cite[Proposition 1.32]{MW15}. As any \(\theta\)-semistable sheaf is an iterated extension of \(\theta\)-stable sheaves, we get that $\varphi^*$ preserves \(\theta\)-semistability. Hence we have a regular automorphism \(\hat{\varphi}\) of $\msv$. The singular locus of \(\msv\) is preserved by \(\hat{\varphi}\) and so we get a lift \(\widetilde{\varphi}\) of \(\hat{\varphi}\) on the blow-up \(\tmsv\). We call \(\sigma\) the class of the exceptional divisor which is fixed by the induced action. The primitive embedding 
\( \sigma^{\perp} = H^{2}(\msv, \mathbb{Z}) \hookrightarrow \widetilde{H}(S,\mathbb{Z}) \cong \bLambda_{24}\)
induces an action of \(\widetilde{\varphi}\) on \(\sigma^{\perp}\), which in turn yields an action on \(\widetilde{H}(S,\mathbb{Z}) \cong \bLambda_{24}\). This action coincides with that induced by \(\varphi\) on \(\widetilde{H}(S,\mathbb{Z})\). Observe that the sublattice \(H^0
(S,\mathbb{Z}) \oplus H^4
(S,\mathbb{Z})\cong \U\) is preserved by the action of \(\widetilde\varphi\) and it is contained in the \((1, 1)\) part
of the lattice \(\bLambda_{24}\), so \(\widetilde{\varphi}\) is numerically induced. 
\endproof

\begin{remark}
In the assumption of \autoref{autgroup}, we ask that the polarization $\theta\in \NS(S)$ is $\varphi$-invariant, so that the automorphism of the K3 surface induces an automorphism of $\tmsv$. Note that this condition is never verified for a nontrivial symplectic automorphism of $S$ according to \cite[Theorem 1.1]{GGOV}. On the other hand the case of induced nonsymplectic involutions is being investigated in a companion paper \cite{BG}.
 
\end{remark}

In the above setting, if the polarization $\theta$ is not $\varphi$-invariant, we may still prove that we get at least a birational self map of $\tmsv$.
To this end we need a generalized notion of stability: in fact, so far we have dealt with moduli spaces of sheaves on K3 surfaces with respect to Gieseker stability. We now consider moduli spaces of Bridgeland semistable objects, and recall some results about their birational geometry.

Given a projective K3 surface \(S\), Bridgeland constructed stability conditions on $D^b(S)$ and showed that they form a complex manifold denoted by $\Stab(S)$.
Moreover, for any Mukai vector $v\in \widetilde{H}(S,\mathbb{Z})$ there exists a Bridgeland stability condition $\tau$ for which Gieseker and $\tau$-semistable objects with Mukai vector $v$ coincide. This singles out a distinguished connected component $\Stab^\dagger(S)$ of $\Stab(S)$ that has a wall and chamber structure, see \cite[Sec.14]{Tom}. Any stability condition in the same chamber determines the same stable objects, thus we will call \emph{Gieseker chamber} the one that recovers Gieseker stability.

Let \(v=2v_0\) be a Mukai vector of O'Grady type and let \(\tau \in \Stab^\dagger(S)\) be a \(v\)-generic stability condition. The moduli space \(M_{v}(S,\tau)\) of \(\tau\)-semistable objects with Mukai vector $v$ is a singular projective symplectic variety admitting a symplectic resolution of singularities \(\widetilde{M}_v(S,\tau)\) of $\og$ type. For any such moduli space the natural stratification of the singular locus is given by
\[
M_{v_0}(S,\tau) \subset \Sym^2 (M_{v_0}(S,\tau)) \subset M_{v}(S,\tau).
\]
We recall the following result.
\begin{proposition}[{\cite[Theorem 5.4]{Meachan_Zhang}}]\label{Meachan-Zhang}
In the above setting, if $v\in \widetilde H(S,\mathbb Z)$ is a Mukai vector of O'Grady type and $\tau, \tau'\in \Stab^\dagger(S)$ are \(v\)-generic stability conditions, then there exists a birational map $\psi\colon M_{v}(S,\tau)\dashrightarrow M_{v}(S,\tau')$, which is stratum preserving in the sense that $\psi$ is defined on the generic point of each stratum of the singular locus of $M_{v}(S,\tau)$ and maps it to the generic point of the $M_{v}(S,\tau')$.
\end{proposition}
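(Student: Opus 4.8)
The plan is to prove this by a wall-crossing argument in $\Stab^\dagger(S)$, bootstrapping the birationality of moduli spaces for \emph{primitive} Mukai vectors, due to Bayer and Macrì, together with a bookkeeping of the stratification. First I would reduce to a single wall: $\Stab^\dagger(S)$ is connected and, with $v$ fixed, carries a locally finite wall-and-chamber decomposition (see \cite[Sec.14]{Tom}), the $v$-generic conditions being the complement of a locally finite union of real-codimension-one walls and $M_v(S,-)$ being constant on chambers. Joining $\tau$ to $\tau'$ by a path, perturbing it so that it avoids all intersections of walls and crosses each wall transversally at finitely many points, and taking the intermediate stability conditions arbitrarily close to the wall they straddle, it suffices (since a finite composition of birational maps each defined at and preserving the generic point of every stratum is again of this kind) to treat $v$-generic $\tau_+$ and $\tau_-$ separated by a single wall $W$, with a generic point $\tau_0\in W$ on the segment between them.

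\emph{The birational map.} By openness of stability, every $\tau_0$-stable object of class $v$ is both $\tau_+$- and $\tau_-$-stable once $\tau_\pm$ are chosen close enough to $\tau_0$, so the stable locus $M_v^{\mathrm{st}}(S,\tau_0)$ sits as a common open subset of $M_v(S,\tau_+)$ and $M_v(S,\tau_-)$; by $v$-genericity its complement in either is a proper closed subset, cut out by the finitely many potential destabilizing classes in the rank-two hyperbolic sublattice of $\widetilde{H}(S,\mathbb{Z})$ attached to $W$. The tautological identification on this common open set is then the sought birational map $\psi\colon M_v(S,\tau_+)\dashrightarrow M_v(S,\tau_-)$. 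When $W$ is a totally semistable wall for $v$, i.e.\ carries no $\tau_0$-stable object of class $v$, I would instead follow Bayer--Macrì and use a spherical (or isotropic) class in the attached sublattice to build a derived autoequivalence of $D^b(S)$ identifying $M_v(S,\tau_-)$ with a moduli space on the $\tau_+$-side, still producing such a $\psi$.

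\emph{Stratum preservation.} The three strata of $M_v(S,\tau)$ are the $S$-equivalence classes of polystable objects of type $E_1\oplus E_2$ with $E_1\not\cong E_2$ (the locus $\Sym^2 M_{v_0}(S,\tau)$ off its diagonal), of type $E\oplus E$ (the diagonal, isomorphic to $M_{v_0}(S,\tau)$), and of $\tau$-stable objects (the smooth locus). The smooth locus is immediate, its generic point being $\tau_0$-stable. For the other two the point is that they are controlled by the \emph{primitive} class $v_0$: the generic point of $\Sym^2 M_{v_0}(S,\tau_+)$, resp.\ of the diagonal, is the class of $E_1\oplus E_2$, resp.\ $E\oplus E$, with the $E_i$ lying in the dense open subset of $M_{v_0}(S,\tau_+)$ on which the primitive Bayer--Macrì wall-crossing map $M_{v_0}(S,\tau_+)\dashrightarrow M_{v_0}(S,\tau_-)$ is an isomorphism; for such $E_i$ the polystable object $E_1\oplus E_2$ is semistable of the same type on both sides of $W$, so $\psi$ sends the generic point of each stratum of $M_v(S,\tau_+)$ to the generic point of the stratum of $M_v(S,\tau_-)$ of the same type. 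Composing the single-wall maps from the first step then finishes the proof.

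\emph{Main obstacle.} The delicate point is showing that $\psi$ is genuinely \emph{defined at} these generic points, and not merely well behaved on each stratum in isolation: a priori the indeterminacy locus of $\psi$ is only known to have codimension at least two in $M_v(S,\tau_+)$ — precisely the codimension of $\Sym^2 M_{v_0}(S,\tau_+)$ — so one must rule out that this indeterminacy locus contains a whole stratum. This is where one genuinely needs the product structure of the singular strata and the fact that primitive wall-crossing for $v_0$ is an isomorphism in codimension one; and at totally semistable walls, one must moreover check that the identifying autoequivalence carries the O'Grady-type stratification to itself.
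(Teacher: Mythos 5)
This proposition is not proved in the paper at all: it is quoted verbatim as \cite[Theorem 5.4]{Meachan\_Zhang}, so there is no in-paper argument to compare yours against. Your sketch does follow the same overall strategy as the cited source (reduce to a single wall in $\Stab^\dagger(S)$, identify the moduli spaces along the common stable locus, invoke Bayer--Macr\`i autoequivalences at totally semistable walls, and control the singular strata through the primitive class $v_0$), so as a roadmap it is the right one.

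However, as a proof it has a genuine gap, and it sits exactly at the point you flag as the ``main obstacle'' without resolving it. The tautological identification on the common $\tau_0$-stable locus says nothing, by itself, about whether $\psi$ extends to a morphism near the generic point of $\Sym^2 M_{v_0}(S,\tau_+)$ or of its diagonal: exhibiting a natural candidate value $E_1\oplus E_2$ on the other side does not show that the rational map is \emph{defined} there, which is the actual content of ``stratum preserving.'' Worse, your key claim that for generic $E_i\in M_{v_0}(S,\tau_+)$ the object $E_1\oplus E_2$ ``is semistable of the same type on both sides of $W$'' can fail: a wall $W$ for $v=2v_0$ may be totally semistable for $v_0$ without being totally semistable for $v$. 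Indeed, by the Bayer--Macr\`i criterion total semistability is caused either by an effective spherical class $s$ with $(s,w)<0$ or by an isotropic class $\iota$ with $(\iota,w)=1$; the spherical case propagates from $v_0$ to $v$, but the isotropic case cannot occur for $v$ since $(\iota,v)=2(\iota,v_0)$ is even. At such a wall no object of class $v_0$ is $\tau_0$-stable, so the generic point of the singular stratum is not of the form you describe, the generic object of class $v$ is nevertheless $\tau_0$-stable, and one must prove that the autoequivalence governing the primitive wall-crossing for $v_0$ is compatible with the tautological identification for $v$ --- which is precisely the case analysis carried out in Meachan--Zhang's Section 5. Until that case (and, more generally, the extension of $\psi$ over the strata rather than merely the existence of a candidate image) is handled, the stratum-preservation claim is not established.
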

We are now in a position to prove our assertion.
\begin{proposition}\label{autbir}
If \(\varphi \in \Aut(S)\) is an automorphism of the K3 surface \(S\), \(v \in \widetilde{H}(S,\mathbb{Z})\)
is a \(\varphi\)-invariant Mukai vector on \(S\), and \(\theta\) is a polarization on \(S\), then \(\varphi\)
induces a birational transformation \(\widetilde\varphi\) on \(\tmsv\). Moreover this birational transformation is numerically induced.
\end{proposition}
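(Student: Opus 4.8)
The plan is to imitate the proof of \autoref{autgroup}, but replacing Gieseker stability by Bridgeland stability so that the failure of $\theta$ to be $\varphi$-invariant gets absorbed into a wall-crossing. Here $v=2w$ with $w^{2}=2$ is of O'Grady type, as in \autoref{Meachan-Zhang}. First I would fix a $v$-generic stability condition $\tau\in\Stab^{\dagger}(S)$ lying in the Gieseker chamber of $v$, so that $M_{v}(S,\tau)=M_{v}(S,\theta)$ and hence, blowing up the singular locus, $\widetilde{M}_{v}(S,\tau)=\tmsv$. The automorphism $\varphi$ induces an exact autoequivalence $\varphi^{*}$ of $D^{b}(S)$ which preserves the distinguished component $\Stab^{\dagger}(S)$ (it permutes skyscraper sheaves, hence sends geometric stability conditions to geometric ones) and fixes $v$ by hypothesis. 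Therefore $\tau'\colonequals\varphi^{*}\tau$ is again a $v$-generic stability condition in $\Stab^{\dagger}(S)$, and $\varphi^{*}$ carries $\tau$-semistable objects of class $v$ to $\tau'$-semistable ones, producing an isomorphism $\Phi_{\varphi}\colon M_{v}(S,\tau)\xrightarrow{\ \sim\ }M_{v}(S,\tau')$ which respects the stratifications of the singular loci and hence lifts to an isomorphism of the symplectic resolutions $\widetilde{M}_{v}(S,\tau)\xrightarrow{\ \sim\ }\widetilde{M}_{v}(S,\tau')$.

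Next I would apply \autoref{Meachan-Zhang} to the $v$-generic stability conditions $\tau',\tau\in\Stab^{\dagger}(S)$: this produces a stratum-preserving birational map $\psi\colon M_{v}(S,\tau')\dashrightarrow M_{v}(S,\tau)$. Since $\psi$ is an isomorphism on a dense open subset meeting each stratum of the singular locus in a dense open of that stratum, it induces a birational map between the blow-ups along the (stratified) singular loci, i.e. a birational map $\widetilde{\psi}\colon\widetilde{M}_{v}(S,\tau')\dashrightarrow\widetilde{M}_{v}(S,\tau)=\tmsv$. Composing, I set $\widetilde{\varphi}\colonequals\widetilde{\psi}\circ(\text{lift of }\Phi_{\varphi})\colon\tmsv\dashrightarrow\tmsv$, the sought birational transformation.

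For the ``numerically induced'' part I would take $\sigma\colonequals[E]\in\NS(\tmsv)$, the class of the exceptional divisor of $\pi\colon\tmsv\to M_{v}(S,\theta)$; by \autoref{h2tmsv} it satisfies $\sigma^{2}=-6$ and $(\sigma,\bL)=3$. Being a birational self-map of an ihs manifold, $\widetilde{\varphi}$ is an isomorphism in codimension one, so $\widetilde{\varphi}^{*}$ is a well-defined Hodge isometry of $H^{2}(\tmsv,\mathbb{Z})$; since $\widetilde{\varphi}$ covers the stratum-preserving birational self-map $\psi\circ\Phi_{\varphi}$ of $M_{v}(S,\theta)$, it sends the generic point of $E$ to the generic point of $E$, whence $\widetilde{\varphi}^{*}\sigma=\sigma$, i.e. condition (i) of \autoref{defnumericallyinduced}. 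Under the finite-index embedding of \autoref{cor:finiteindexemb} together with the compatible Hodge embedding $\sigma^{\perp}\cong v^{\perp}\hookrightarrow\widetilde{H}(S,\mathbb{Z})\cong\bLambda_{24}$, the action induced by $\widetilde{\varphi}$ on $\sigma^{\perp}$, and hence on $\bLambda_{24}$, coincides with $\varphi^{*}$: this follows from the functoriality of the Mukai isometry $\lambda_{v}$ of \autoref{pr1.7} in $\varphi$, together with its invariance under wall-crossing (so that $\widetilde{\psi}$ induces the identity on $v^{\perp}$). As $\varphi^{*}$ fixes the classes $(1,0,0)$ and $(0,0,1)$, the unimodular sublattice $\U=H^{0}(S,\mathbb{Z})\oplus H^{4}(S,\mathbb{Z})$ is contained in $(\bLambda_{24})^{\langle\widetilde{\varphi}\rangle}$, hence is a direct summand, and it lies in $\bLambda_{24}^{1,1}$; thus condition (ii) of \autoref{defnumericallyinduced} holds and $\widetilde{\varphi}$ is numerically induced.

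The main obstacle I expect is the bookkeeping in the last paragraph, namely checking that the cohomological action of the composite birational map $\widetilde{\varphi}$ on $\sigma^{\perp}\cong v^{\perp}$ really is $\varphi^{*}$. This requires knowing that wall-crossing within $\Stab^{\dagger}(S)$ does not alter the Mukai homomorphism $\lambda_{v}$ (a Bayer--Macr\`i type statement, entering through \autoref{Meachan-Zhang}) and that $\Phi_{\varphi}$ intertwines $\lambda_{v}$ on source and target with $\varphi^{*}$ on $v^{\perp}$; more elementarily, one must verify that the stratum-preserving (birational) maps genuinely lift to the symplectic resolutions compatibly with these identifications. Everything else is formal once Bridgeland moduli spaces are brought into the picture, and the ``numerically induced'' conclusion is then literally the argument of \autoref{autgroup}.
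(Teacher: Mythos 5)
Your proposal is correct and follows essentially the same route as the paper: pass to a Bridgeland stability condition in the Gieseker chamber, use $\varphi^*$ to get an isomorphism $M_v(S,\tau)\xrightarrow{\sim}M_v(S,\varphi^*\tau)$, compose with the stratum-preserving birational map of \autoref{Meachan-Zhang}, lift to the resolutions, and conclude numerical inducedness from the invariance of $E$ and of $H^0(S,\mathbb{Z})\oplus H^4(S,\mathbb{Z})\cong \U$. The wall-crossing compatibility of $\lambda_v$ that you flag as the main remaining check is indeed the point the paper passes over silently when it asserts that the induced action on $\bLambda_{24}$ coincides with $\varphi^*$.
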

\begin{proof}
Let $\tau$ be a Bridgeland stability condition in the Gieseker chamber: then any \(\theta\)-stable sheaf \(\mathscr{F}\) is $\tau$-stable. As chambers are open, for a generic $\tau$ the stability condition $\varphi^*(\tau)$ does not lie on any wall for the Mukai vector of $\varphi^*\mathscr{F}$ and \(\varphi^{*}\mathscr{F}\) is \(\varphi^*(\tau)\)-stable. For this reason we have an isomorphism  \({\widehat{\varphi}\colon M_{v}(S,\tau)\xrightarrow{\sim} M_{v}(S,\varphi^*(\tau))}\) which restricts to an isomorphism between the singular loci. Composing $\widehat{\varphi}$ with the birational stratum preserving transformation ${M_{v} (S,\varphi^*(\tau))\xrightarrow{\sim} M_{v}(S,\tau)}$ of \autoref{Meachan-Zhang}, we get a birational self-transformation $\widetilde\varphi$ of $M_{v}(S,\tau)$. As $\widetilde \varphi$ preserves the singular locus, it lifts to a birational self-transformation of the smooth irreducible holomorphic symplectic manifold ${\widetilde M_{v}(S,\tau)=\tmsv}$. With a slight abuse of notation we still denote it by \(\tilde\varphi\). By construction, this map preserves the exceptional divisor and its class \(\sigma \in H^2 (\tmsv,\mathbb{Z}\). As in \autoref{autgroup}, the primitive embedding 
\( \sigma^{\perp} \cong H^{2}(\msv, \mathbb{Z}) \hookrightarrow \widetilde{H}(S,\mathbb{Z}) \cong \bLambda_{24}\) and
the induced action of \(\widetilde{\varphi}\) on \(\sigma^{\perp}\) give an action on \(\widetilde{H}(S,\mathbb{Z}) \cong \bLambda_{24}\) which coincides with that induced by \(\varphi\) on \(\widetilde{H}(S,\mathbb{Z})\). Since the sublattice \(H^0
(S,\mathbb{Z}) \oplus H^4
(S,\mathbb{Z})\cong \U\) is preserved by the action of \(\widetilde\varphi\) and it is contained in the \((1, 1)\) part
of the lattice \(\bLambda_{24}\), we conclude that $\widetilde{\varphi}$ is numerically induced. 
\end{proof}

\begin{theorem}\label{induced_aut}
Let $(X,\eta)$ be a smooth marked pair of $\og$ type and let $G\subset \mathrm{Bir}(X)$ be a finite subgroup. If $G$ is a numerically induced group of birational transformations, then there exists a K3 surface $S$ with an injective group homomorphism $G\hookrightarrow \mathrm{Aut}(S)$, a $G$-invariant Mukai vector $v$ and a $v$-generic polarization $\theta$ on $S$ such that $X$ is birational to $\tmsv$ and $G\subset\mathrm{Bir}(X)$ is an induced group of birational transformations. 
\end{theorem}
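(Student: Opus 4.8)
The plan is to carry out the construction from the proof of \autoref{FGG} \emph{$G$-equivariantly}, following the strategy used for Hilbert schemes of K3 surfaces and generalised Kummer manifolds in \cite{MW15, Grossi:induced}. Since $G$ is numerically induced there is a $G$-invariant primitive class $\sigma\in\NS(X)$ with $\sigma^2=-6$ and $(\sigma,\bL)=3$, and the Hodge embedding $\sigma^{\perp_{\bL}}\hookrightarrow\bLambda_{24}$ carries an action of $G$ — the one extending the action on $\sigma^{\perp_{\bL}}$ by fixing the square $2$ generator $w$ of $(\sigma^{\perp_{\bL}})^{\perp_{\bLambda_{24}}}$, which exists since $G$ acts trivially on the order two discriminant groups of $\sigma^{\perp_{\bL}}$ and of $[w]$ — for which $(\bLambda_{24}^G)^{1,1}$ contains a copy $\U_0$ of $\U$ as a direct summand. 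Since $\U_0$ is unimodular and, being a direct summand of the saturated sublattice $(\bLambda_{24}^G)^{1,1}$, primitively embedded, it splits off: $\bLambda_{24}=\U_0\oplus_{\perp}\mathbf{R}$ with $\mathbf{R}\cong\U^{\oplus3}\oplus\E_8(-1)^{\oplus2}$ a copy of the K3 lattice. Here $G$ acts trivially on $\U_0$ and by Hodge isometries on $\mathbf{R}$, with $\mathbf{R}^{2,0}=(\sigma^{\perp_{\bL}})^{2,0}$; and since $G$ fixes $\sigma$, $w$ and $\U_0$ and preserves the ample cone of $X$, a short orientation check shows that $G$ acts on $\mathbf{R}$ through $\mathrm{O}^{+}(\mathbf{R})$.

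The next step — which I expect to be the crux — is to realise $(\mathbf{R},G)$ by an actual projective K3 surface. Surjectivity of the period map gives a marked K3 surface $(S,f)$ with $H^2(S,\mathbb{Z})\cong\mathbf{R}$ as Hodge lattices, hence an action of $G$ on $H^2(S,\mathbb{Z})$ by Hodge isometries, which is faithful because that of $\Bir(X)$ on $H^2(X,\mathbb{Z})$ is. To upgrade this to an embedding $G\hookrightarrow\Aut(S)$ it suffices, by the strong Torelli theorem, to arrange that $G$ preserve the Kähler cone of $S$: being finite, $G$ acts on the positive cone of $\mathbf{R}^{1,1}$, which is a model of hyperbolic space, so it fixes a class, and after modifying $f$ by a product of $(-2)$-reflections one may take this class in the interior of the Kähler chamber and rational, so that $S$ is moreover projective and $G$ fixes an ample class; this is the argument of \cite[\S1]{MW15}. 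Identifying finally $\widetilde{H}(S,\mathbb{Z})=\bLambda_{24}$ so that $H^0(S,\mathbb{Z})\oplus H^4(S,\mathbb{Z})=\U_0$, this identification is $G$-equivariant, since $G$ acts trivially on $\U_0$.

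Set now $v:=2w$: this is an algebraic $G$-invariant Mukai vector with $w^2=2$ and $w$ (or $-w$) positive, and one can choose a $v$-generic polarization $\theta$ that is in addition $G$-invariant (using that $\NS(S)^G$ contains a positive class and that the $v$-genericity walls do not cover $\NS(S)^G_{\mathbb{R}}$; if convenient, this may be bypassed by appealing to \autoref{autbir}). Thus $(S,v,\theta)$ is a $G$-equivariant OLS-triple and, by \autoref{autgroup} together with functoriality of the construction, $G\hookrightarrow\Aut(\tmsv)$ is a numerically induced group of automorphisms. Moreover \autoref{pr1.7} and \autoref{cor:finiteindexemb} give a $G$-equivariant Hodge isometry $H^2(\msv,\mathbb{Z})\cong v^{\perp}=\sigma^{\perp_{\bL}}$ and a finite index embedding $H^2(\msv,\mathbb{Z})\oplus_{\perp}\mathbb{Z}E\hookrightarrow H^2(\tmsv,\mathbb{Z})$ with $E$ the $G$-fixed exceptional class of square $-6$ and divisibility $3$. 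Running Nikulin's extension criterion \cite[Corollary 1.5.2]{Nik79} exactly as in the proof of \autoref{FGG} — which is automatically $G$-equivariant, since $G$ fixes $\sigma$ and $E$ and acts trivially on the relevant discriminant groups — yields a $G$-equivariant Hodge isometry $\Psi\colon H^2(X,\mathbb{Z})\xrightarrow{\simeq}H^2(\tmsv,\mathbb{Z})$ with $\Psi(\sigma)=E$. By maximality of $\Mon^2(\og)$ (\cite{onorati2020monodromy}, \cite[Theorem 5.2(2)]{MROG6mon}) the isometry $\Psi$ is realised by a birational map $\phi\colon X\dashrightarrow\tmsv$; for each $g\in G$, writing $g_X\in\Bir(X)$ and $g_{\tmsv}\in\Aut(\tmsv)$ for the induced transformations, the map $g_{\tmsv}^{-1}\circ\phi\circ g_X$ realises $\Psi$ as well, so $\phi^{-1}\circ g_{\tmsv}^{-1}\circ\phi\circ g_X\in\Bir(X)$ acts trivially on $H^2(X,\mathbb{Z})$ and is therefore the identity (again by faithfulness of the action of $\Bir(X)$ on $H^2(X,\mathbb{Z})$). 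Hence $\phi$ is $G$-equivariant and $G\subset\Bir(X)$ is induced.

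The main obstacle is the geometric realisation in the second paragraph: producing the projective K3 surface $S$ with $G$ acting by automorphisms out of the abstract $G$-action on the K3 lattice and its Hodge structure, which rests on the fixed point argument for the Kähler cone and the attendant orientation bookkeeping. The only other delicate point is the existence of a $G$-invariant $v$-generic polarization, which as remarked can be circumvented using \autoref{autbir}.
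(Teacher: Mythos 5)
Your overall skeleton (run the proof of \autoref{FGG} $G$-equivariantly, realise the resulting Hodge-isometric $G$-action on the K3 lattice by automorphisms of a K3 surface via Torelli, then transport everything back through a $G$-equivariant Hodge isometry and the monodromy group) matches the paper's, and your final equivariance check for the birational map $\phi$ is fine. But there is a genuine gap exactly at the step you flag as the crux. The fixed-point-plus-reflections argument does \emph{not} produce a $G$-invariant K\"ahler class in general: if $\delta\in H^2(S,\mathbb{Z})_G\cap\NS(S)$ has $\delta^2=-2$, then every $G$-invariant class $\omega$ in the positive cone satisfies $\omega\cdot\delta=\omega\cdot\bigl(\tfrac{1}{|G|}\sum_{g}g\delta\bigr)=0$, so the entire invariant positive cone lies on the wall $\delta^{\perp}$; conjugating by a product of $(-2)$-reflections only replaces $\delta^{\perp}$ by another wall containing the invariant cone, and no marking change can make an invariant class ample. (Concretely, your argument would ``prove'' that the reflection in a $(-2)$-class of $\NS(S)$, which acts trivially on the transcendental lattice, is induced by an automorphism, which is false.) So the essential missing step is to show that $H^2(S,\mathbb{Z})_G$ contains no $(-2)$-classes, and this cannot be done abstractly: it is precisely where the hypothesis $G\subset\Bir(X)$ enters. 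The paper argues that $H^2(S,\mathbb{Z})_G\cong\bL_G$ contains no prime exceptional divisors of $X$ by \cite[Theorem 2.2]{GGOV}, and that by the classification of prime exceptional divisors for $\og$ type manifolds \cite[Proposition 3.1]{MongardiOnorati22} any $(-2)$-class in $\bL_G$ would give one; only then does Markman's criterion \cite[Theorem 1.3]{markman2011survey} yield $G\subseteq\Aut(S)$.

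A secondary divergence: the paper does not treat $G$ uniformly but splits into the symplectic part $G_s$ (where the above wall-divisor argument applies) and the purely non-symplectic quotient, for which it first deforms $X$ to a generic member with $\bL^G=\NS(X)$ so that $G$ acts trivially on $\NS(S)$ and hence preserves all K\"ahler classes. Your uniform treatment would additionally need to justify that each element of $G$ acts as an orientation-preserving, effective Hodge isometry of $\mathbf{R}$ (your ``orientation bookkeeping''), which again is cleanest after separating the symplectic and non-symplectic parts. The remaining points you raise (the $G$-invariant $v$-generic polarization, bypassed via \autoref{autbir}; Nikulin's gluing; monodromy maximality) are handled essentially as in the paper.
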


\begin{proof}
We first consider the case when $G$ is symplectic. Then the transcendental lattice $\mathbf{T}(X)$ is contained inside $\bL^G$. Since $G$ is numerically induced, there exists a class $\sigma\in \mathrm{NS}(X)$ with $\sigma^2=-6$ and $(\sigma,\bL)=3$ which is $G$ invariant. 
We consider the Hodge embedding $\sigma^{\perp_{\bL}}\hookrightarrow \bLambda_{24}$ and we call $w$ the generator of the orthogonal complement of the image of $\sigma^{\perp_{\bL}}$ inside $\bLambda_{24}$.
We can extend the action of \(G\) on \(\bLambda_{24}\) in such a way that the action on the orthogonal complement of $\sigma^{\perp_{\bL}}$ in \(\bLambda_{24}\) is trivial.
Note that, by construction, $w$ is of \((1,1)\) type and it is fixed by the induced action of \(G\) on $\bLambda_{24}$. Moreover by assumption we have that 
$$(\bLambda_{24})^G=\U \oplus \boldsymbol{\mathrm{T}}$$
with $\U$ contained in the \((1,1)\) part of the Hodge structure of $(\bLambda_{24})^G$.
Since \(\sigma\) is \(G\)-invariant then there is a Hodge embedding \(\bL_{G}=(\sigma^{\perp_{\bL}})=(\bLambda_{24})_{G}\), then \(\bL_{G} \hookrightarrow \bLambda_{24}\) and \(\bL_{G}^{\perp_{\bLambda_{24}}}= \bLambda_{24}^G=\U \oplus \mathbf{T}\).

Hence the embedding \(\bL_G \hookrightarrow \bLambda_{24}\) can be decomposed in \(\bL_G \xhookrightarrow{\varphi} \U^{\oplus3} \oplus \E_8(-1)^{\oplus 2} \hookrightarrow \bLambda_{24}\), where \(\varphi\) is a compatible Hodge embedding and by construction \(\bL_{G}^{\perp_{\U^{\oplus3} \oplus \E_8(-1)^{\oplus2}}} =\mathbf{T}\). Since \(\bL_G\) is of \((1,1)\)-type then \(\varphi(\bL_G) \subseteq (\U^{\oplus3} \oplus \E_8(-1)^{\oplus2})^{1,1}\). The lattice \(\U^{\oplus3} \oplus \E_8(-1)^{\oplus2}\) has an Hodge structure hence, by Torelli theorem for K3 surfaces, there exists a K3 surface \(S\) such that \(H^{2}(S,\mathbb{Z})\) is Hodge isometric to \(\U^{\oplus3} \oplus \E_8(-1)^{\oplus 2}\).
Moreover the action induced by \(G\) on \(H^{2}(S,\mathbb{Z})\) is a Hodge isometry since by construction \(H^{2}(S,\mathbb{Z})_G=\bL_G\) and \(H^{2}(S,\mathbb{Z})^G=\mathbf{T}\). Furthermore the induced action of \(G\) on \(H^{2}(S,\mathbb{Z})\) is a monodromy operator due to the maximality of the monodromy group of a K3 surface in the group of isometries, and the coinvariant lattice \(H^{2}(S, \mathbb{Z})_G\) does not contain wall divisors for the K3 surface, i.e. vectors of square \(-2\). In fact the coinvariant lattice \(H^{2}(S,\mathbb{Z})_G\) is isometric to the coinvariant lattice \(\bL_G\) with respect to the induced action of \(G\subset \Bir(X)\) on the second integral cohomology lattice of the manifold of \(\og\) type, hence it does not contain prime exceptional divisors by \cite[Theorem 2.2]{GGOV}.  The characterization of prime exceptional divisors for manifolds of \(\og\) type (see \cite[Proposition 3.1]{MongardiOnorati22}) thus implies that \(H^{2}(S,\mathbb{Z})_G\) does not contain any vectors of square \(-2\). We apply \cite[Theorem 1.3]{markman2011survey} and we conclude that \(G \subseteq \Aut(S)\). Since \(w\) is invariant by construction, \autoref{FGG} implies that \(X\) is birational to \(\tmsv\), where \(v=2w\) is an algebraic Mukai vector on \(S\) and \(v^{2}=(2w)^{2}=8\). Moreover the induced action of \(G \subseteq \Aut(S)\) on \(H^{2}(\tmsv, \mathbb{Z})\) coincides by construction with the given action of \(G\) on \(H^{2}(X,\mathbb{Z})=\bL\).  %which is the action we started with.
We deduce that \(G\subset\Bir(X)\) is an induced group of birational transformations.

Assume now that every non-trivial element of $G$ has a non-symplectic action. Without loss of generality, choosing $X$ generic, we can assume that $\bL^{G}=\NS(X)$. As before we have that $(\bLambda_{24})^{G}= \U \oplus \boldsymbol{\mathrm{T}}$ and we can consider the K3 surface \(S\) associated to the Hodge structure induced on $\U^{\perp} \subset \bLambda_{24}$: indeed, by the genericity of $X$,  we can assume that \( (\bLambda_{24})^{1,1}=(\bLambda_{24})^{G}\) so that \((\U^{\perp})^{1,1}\) coincides with \(\boldsymbol{\mathrm{T}}\), which has thus signature (1,-) and is of type (1,1).
Again by \autoref{FGG} $X$ is birational to a moduli space of sheaves on $S$. The group $G$ is a group of Hodge isometries of $S$ preserving $\boldsymbol{\mathrm{T}}=\NS(S)$: in particular it preserves all K\"ahler classes and we can conclude as in the symplectic case.

%Now we assume that every non-trivial element of $G$ has a non-symplectic action and without loss of generality we can assume that $\bL^{G}=\NS(X)$. As before we have that $(\bLambda_{24})^{G}= \U \oplus \boldsymbol{\mathrm{T}}$ and we can consider the K3 surface \(S\) associated to the Hodge structure induced on $\U^{\perp} \subset \bLambda_{24}$.
%Again by \autoref{FGG} $X$ is birational to a moduli space of sheaves on $S$. The group $G$ is a group of Hodge isometries of $S$ preserving $\boldsymbol{\mathrm{T}}=\NS(S)$: in particular it preserves all K\"ahler classes and we can conclude as in the symplectic case.

Suppose now that $G$ is any numerically induced group of birational transformations and let \(G_s\) be its symplectic part. Considering the action of \(G_s\) we obtain a K3 surface $S$ as in the first step with \(G_s\subset \Aut(S)\).  We can extend this action to an action of $G$ by applying the second step to the quotient group $\widehat{G}\coloneqq G/G_s$.
%Without loss of generality we may suppose that NS(X) has minimal rank, i.e. \(\NS(X)=\bL_{G_s}\oplus \langle\sigma\rangle\). 
%Observe that 
%\[\NS(X)=\NS(S)\oplus \langle b \rangle\oplus \langle\sigma\rangle, \]
%where $b$ is the orthogonal complement of $\sigma$ in $\A_2(-1)$. Since $\NS(X),\sigma$ and $b$ are preserved by the action of $G$, we have a well defined action of $G$ on $\NS(S)$ as well. 
%Now observe that the group \(\hat{G}=G/G_s\) acts with no symplectic elements on $\bL^{G_s}=\T(X)\oplus \langle\sigma\rangle$. Moreover, since $\sigma$ is invariant we have a well defined action of \(\hat{G}\) on $\T(X)=\T(S)$.
%Gluing the two actions we get an action of $G$ on \(H^2(S,\mathbb{Z})\) which preserves both $\NS(S)$ and $\T(S)$. 
\end{proof}

\subsection{Induced involutions}\label{sub:ind_involut}
In this section we give an application of the criterion about induced automorphisms given in  \autoref{induced_aut}. The lattice-theoretic criterion that we prove allows us to determine if a birational transformation of a manifold of \(\og\) type which is at least
birational to \(\tmsv\), is induced by an automorphism of the K3 surface \(S\). Let us be more definite.  By \cite{GOV20} we know that there are no regular symplectic involutions on a manifold of \(\og\) type. However, \cite[Theorem 1.1]{Marquand:sympl.birat.} provides a lattice-theoretic classification of birational symplectic involutions. In the paper, the authors consider a manifold of \(\og\) type,
a fixed marking \(\eta \colon H^2(X,\mathbb{Z}) \to \bL\) of \(X\), and classify invariant and coinvariant
sublattices, denoted by \(H^{2}(X,\mathbb{Z})_+ \cong \bL^G\) and \(H^{2}(X,\mathbb{Z})_- \cong \bL_G\) respectively. In particular, they classify the images of symplectic birational involutions with respect to the representation
map
\begin{align*}
\eta_{*} \colon \Bir(X) &\to O(\bL)\\
f &\mapsto \eta \circ f^{*} \circ \eta^{-1}.
\end{align*}
We prove that there exists a unique induced birational symplectic involution on a manifold of \(\og\) type and we give the following characterization in terms of its lattice-theoretic behaviour. 

\begin{remark}\label{remark trivial action}
If the action is induced then the action on the discriminant group is trivial. Indeed it is enough to note that a generator of the discriminant group \(A_{\bL}\) is the class \([\frac{\sigma}{3}]\) where \(\sigma\) is the class of the exceptional divisor.
\end{remark}
\begin{theorem}\label{example_ind_aut}
Let \((X, \eta)\) be a marked pair of \(\og\) type. Assume that \(X\) is a numerical moduli space and let \(\varphi \in \Bir(X)\) be a birational symplectic involution. Then \(\varphi\) is induced if and only if
\[H^{2}(X,\mathbb{Z})^{\varphi} \cong \U^{\oplus{3}} \oplus \E_8(-2) \oplus \A_2(-1)  \ \text{and} \ H^{2}(X,\mathbb{Z})_{\varphi} \cong \E_8(-2).\]
\end{theorem}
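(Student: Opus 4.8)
The plan is to combine the classification of symplectic birational involutions of \(\og\) type in \cite[Theorem 1.1]{Marquand:sympl.birat.} with the equivalence "induced \(\Leftrightarrow\) numerically induced": by \autoref{autbir} an induced birational transformation is numerically induced, and by \autoref{induced_aut} a finite numerically induced group is induced, so for a marked pair of \(\og\) type the statement is equivalent to a lattice‑theoretic description of \emph{numerically induced} symplectic birational involutions. I would prove the two implications separately, using the Nikulin case for the forward one and an explicit verification of \autoref{defnumericallyinduced} for the converse.

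For "\(\varphi\) induced \(\Rightarrow\) the stated lattices", suppose \(\varphi\) is induced by \(\psi\in\Aut(S)\) on \(X\) birational to \(\tmsv\) with \(v=2w\) a \(\psi\)-fixed Mukai vector. Then \(\psi\) has order two, acts trivially on \(H^{2,0}(S)\cong H^{2,0}(\tmsv)\), and is nontrivial, hence is a Nikulin involution and \(H^2(S,\mathbb{Z})_\psi\cong\E_8(-2)\). Since \(\psi^*\) fixes \(H^0\oplus H^4\) and \(w\), the coinvariant lattice lies inside \(v^\perp=w^\perp\); tracing through the description \(H^2(\tmsv,\mathbb{Z})\cong\Gamma_v\) of \autoref{h2tmsv}, in which the exceptional class is \(\widetilde\psi\)-fixed, one finds that the coinvariant of \(\widetilde\psi\) forces \(k=0\) and lies in \((v^\perp)_\psi\), so \(H^2(X,\mathbb{Z})_\varphi\cong\E_8(-2)\). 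Then \(H^2(X,\mathbb{Z})^\varphi\) is the orthogonal complement of \(\E_8(-2)\) in \(\bL\): a discriminant‑form computation gives signature \((3,13)\) and discriminant group \((\mathbb{Z}/2)^8\oplus\mathbb{Z}/3\), with the \((\mathbb{Z}/2)^8\)-part glued to \(A_{\E_8(-2)}\) and the \(\mathbb{Z}/3\)-part carrying \(q_{\A_2(-1)}=q_\bL\), i.e.\ it lies in the genus of \(\U^{\oplus 3}\oplus\E_8(-2)\oplus\A_2(-1)\), which is a single class by Nikulin's uniqueness results in \cite{Nik79}. Alternatively, one may eliminate every other entry of Marquand's list: by \autoref{remark trivial action} an induced involution acts trivially on \(A_\bL\cong\mathbb{Z}/3\), and only the pair in the statement has this property.

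For the converse, assume the two lattices are as stated; by \autoref{induced_aut} it is enough to check that \(\langle\varphi\rangle\) is numerically induced. A discriminant‑form computation shows that the orthogonal complement \(P\) of the \(\A_2(-1)\)-summand of \(\bL^\varphi\) in \(\bL\) is even unimodular of signature \((3,19)\), so \(\bL\cong\A_2(-1)\oplus P\), the \(\A_2(-1)\) is the standard summand, and \(\varphi\) acts trivially on it while acting "Nikulin‑type" on \(P\cong\U^{\oplus 3}\oplus\E_8(-1)^{\oplus 2}\) (invariant \(\U^{\oplus 3}\oplus\E_8(-2)\), coinvariant \(\E_8(-2)\)). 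Using that \(X\) is a numerical moduli space — which provides a class of square \(-6\) and divisibility \(3\) in \(\NS(X)\), and which, together with the decomposition above and the fact that the unimodular summand \(P\) contains no square \(-6\), divisibility \(3\) class, allows one to take it \(\varphi\)-invariant, i.e.\ inside \((\bL^\varphi)^{1,1}\cap\A_2(-1)\) — fix a primitive \(\sigma\in\NS(X)\) with \(\sigma^2=-6\), \((\sigma,\bL)=3\) and \(\varphi^*\sigma=\sigma\). Setting \(u:=\sigma^\perp\cap\A_2(-1)\) one has \(u^2=-2\), \(u\) is \(\varphi\)-fixed and of type \((1,1)\), \(\sigma^\perp\cong\U^{\oplus 3}\oplus\E_8(-1)^{\oplus 2}\oplus\langle u\rangle\), and \(u/2\) generates \(A_{\sigma^\perp}\). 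The unique primitive embedding \(\sigma^\perp\hookrightarrow\bLambda_{24}\) realises \(\bLambda_{24}\) as the overlattice of \(\sigma^\perp\oplus_\perp\langle w\rangle\) (\(w^2=2\)) generated by \(g:=\tfrac12(u+w)\); extending the action by \(w\mapsto w\) is well defined precisely because \(u\), hence \(g\), is fixed. With this action \((\bLambda_{24})_\varphi\) is again \(\E_8(-2)\), while \(g\) and \(w-g\) are \(\varphi\)-fixed, isotropic, of type \((1,1)\), and satisfy \(g\cdot(w-g)=1\), so they span a copy of \(\U\); being unimodular it splits off \(((\bLambda_{24})^\varphi)^{1,1}\). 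This is condition (ii) of \autoref{defnumericallyinduced}, so \(\langle\varphi\rangle\) is numerically induced and \autoref{induced_aut} finishes the proof.

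The step I expect to be the main obstacle is the lattice bookkeeping needed to pin down the invariant and coinvariant lattices up to isometry, not merely up to genus: this means carefully propagating discriminant forms through the gluings \(\E_8(-2)\hookrightarrow\bL\), \(\sigma^\perp\hookrightarrow\bLambda_{24}\) and \(\langle u\rangle\oplus\langle\sigma\rangle\hookrightarrow\A_2(-1)\), and invoking Nikulin's uniqueness theorems. A second, more delicate point appears only in the converse: one must genuinely use the hypothesis that \(X\) is a numerical moduli space to place the \(\varphi\)-invariant square \(-6\), divisibility \(3\) class inside \(\NS(X)\), and verify that this hypothesis is compatible with the prescribed invariant lattice — equivalently, that the transcendental lattice of \(X\) may be taken to embed in the \(\U^{\oplus 3}\oplus\E_8(-2)\)-part of \(\bL^\varphi\).
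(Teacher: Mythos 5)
Your overall strategy --- reducing ``induced'' to ``numerically induced'' via \autoref{autbir} and \autoref{induced_aut} and then doing lattice theory --- is the paper's strategy, and your primary forward-direction argument (the inducing involution of $S$ is a Nikulin involution with coinvariant $\E_8(-2)$, which passes unchanged into $\Gamma_v$ because the glue only involves the invariant classes $\sigma$ and $v^\perp$, after which the invariant lattice is pinned down as the orthogonal complement by a genus computation) is a legitimate, if heavier, alternative to the paper's route through Marquand's classification. However, your ``alternative'' shortcut for that direction is wrong: triviality of the action on $A_{\bL}\cong\mathbb{Z}/3\mathbb{Z}$ does \emph{not} single out the stated pair in Marquand's list --- it leaves two candidates, $\U^{\oplus 3}\oplus\E_8(-2)\oplus\A_2(-1)$ and $\E_6(-2)\oplus\U(2)^{\oplus 2}\oplus[2]\oplus[-2]$, and one needs an extra argument (a numerically induced involution fixes a primitive class of square $-6$ and divisibility $3$, whereas every class of the second lattice has even divisibility) to eliminate the latter.

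The genuine gap is in your converse. You try to manufacture the data of \autoref{defnumericallyinduced} from scratch: you relocate the square $-6$, divisibility $3$ class into the $\A_2(-1)$-summand of $\bL^{\varphi}$ and then build a hyperbolic plane out of $u=\sigma^{\perp}\cap\A_2(-1)$ and $w$. For this to verify the definitions you need $\sigma$ and $u$ to be of type $(1,1)$, i.e.\ essentially $\A_2(-1)\subset\NS(X)$; but for a symplectic involution the hypothesis only gives $\T(X)\subset\bL^{\varphi}$, and $\T(X)$ (which can have rank up to $15$ inside the rank $16$ lattice $\bL^{\varphi}$) may well meet the $\A_2(-1)$-summand, so there is no reason the classes you pick are algebraic or that your new embedding satisfies condition (b) of \autoref{nms}. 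You flag this yourself as the ``delicate point'' but do not resolve it. The fix is to move nothing: take the class $\sigma\in\NS(X)$ and the copy of $\U\subset\bLambda_{24}^{1,1}$ that the numerical-moduli-space hypothesis already provides, and observe (as the paper does) that both are automatically $\varphi$-invariant, because the coinvariant lattice $\E_8(-2)$ contains no class of divisibility $3$ (so $\sigma\in\bL^{\varphi}$) and, being negative definite, admits no sublattice isometric to $\U$ (so the hyperbolic plane lies in the $(1,1)$-part of $(\bLambda_{24})^{\varphi}$). After that, \autoref{induced_aut} concludes.
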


\proof
If \(\varphi\) is an induced symplectic birational transformation, then by \autoref{autbir} we have that \(\varphi\) is numerically induced. In \cite[Theorem 1.1]{Marquand:sympl.birat.} we have a classification of symplectic birational involutions on manifolds of \(\og\) type. As the action of an induced automorphism on the discriminant is trivial, we deduce that the $H^2(X,\mathbb Z)^\varphi$ is either $\U^{\oplus{3}} \oplus \E_8(-2) \oplus \A_2(-1)$ or $\E_6(-2)\oplus \U(2)^{\oplus2}\oplus[2]\oplus [-2]$. \autoref{induced_aut} excludes the latter: in fact, if the action is induced, then it is numerically induced. In particular there exists a primitive invariant class $\sigma$ of square $-6$ and divisibility $3$, in contradiction with the fact that any element in $\E_6(-2)\oplus \U(2)^{\oplus2}\oplus[2]\oplus [-2]$ has even divisibility.

To prove the converse, let $\varphi$ be a birational symplectic involution with \[H^2(X,\mathbb Z)^\varphi= \U^{\oplus{3}} \oplus \E_8(-2) \oplus \A_2(-1)\text{ and } H^{2}(X,\mathbb{Z})_{\varphi} \cong \E_8(-2).\]
We first prove that $\varphi$ is numerically induced.
Since \(X\) is a numerical moduli space then there exists a class $\sigma\in \NS(X)$ with \(\sigma^2=-6\) and $(\sigma,\bL)=3$ such that, given the Hodge embedding $\sigma^{\perp}\hookrightarrow \bLambda_{24}$, then $\bLambda_{24}^{1,1}$ contains $\U$ as a direct summand. Observe that such copy of $\U$ must be contained in \(H^{2}(X,\mathbb{Z})^{\varphi}\) as it is not possible to realize $\U$ as a sublattice of $\E_8(-2)$.
Moreover, since $H^{2}(X,\mathbb{Z})_{\varphi} \cong\E_8(-2)$ does not contain any divisibility 3 class, then $\sigma$ must lie in \(H^2(X,\mathbb Z)^\varphi\) i.e. it is invariant and $\varphi$ is thus numerically induced.
The result now follows from \autoref{induced_aut}.
\endproof

\bibliographystyle{amsplain}
\bibliography{main}
\end{document}